\numberwithin{equation}{section}
\theoremstyle{plain}
\newtheorem{te}{Theorem}[section]
\newtheorem{prop}[te]{Proposition}
\newtheorem{lem}[te]{Lemma}
\newtheorem*{ack*}{Acknowledgment}
\theoremstyle{remark}
\newtheorem{rmk}{Remark}
\newcommand{\dsum}{\displaystyle\sum}
\newcommand{\dint}{\displaystyle\int}
\newcommand{\nocontentsline}[3]{}
\def\x{{\boldsymbol x}}
\def\y{{\bf y}}
\def\z{{\bf z}}
\def\b{{\boldsymbol b}}
\def\t{{\bf t}}
\def\0{{\bf 0}}
\def\a{{\boldsymbol a}}
\def\b{{\boldsymbol b}}
\def\d{{\boldsymbol d}}
\def\c{{\boldsymbol c}}
\def\h{{\boldsymbol h}}
\def\y{{\boldsymbol y}}
\def\z{{\boldsymbol z}}
\def\r{{\boldsymbol r}}
\def\R{{\mathbb R}}
\def\Q{{\mathbb Q}}
\def\N{{\mathbb N}}
\def\C{{\mathbb C}}
\def\Z{{\mathbb Z}}
\def\P{{\mathbb P}}\def\A{{\mathbb A}}
\newcommand{\bA}{\mathbb{A}}
\DeclareMathOperator{\codim}{codim}
\begin{document}

\author{Heejong Lee}
\address[H.~Lee]{Department of Mathematics, Purdue University, 150 N. University Street, West Lafayette, IN 47907-2067, USA}
    \email{\href{mailto:lee4878@purdue.edu}{lee4878@purdue.edu}}

\author{Seungsu Lee}
\address[S.~Lee]{Department of Mathematics, University of Michigan, 4839 East Hall,
530 Church Street,
Ann Arbor, MI 48109-1043, USA}
    \email{\href{mailto:starlee@umich.edu}{starlee@umich.edu}}
    
\author{Kiseok Yeon}

\address[K.~Yeon]{Department of Mathematics, Purdue University, 150 N. University Street, West Lafayette, IN 47907-2067, USA}
\email{\href{mailto:kyeon@purdue.edu}{kyeon@purdue.edu}}
\subjclass[2020]{11E76,14G25}
\keywords{Projective variety, Local solubility}

\title[Heejong Lee, Seungsu Lee, Kiseok Yeon]{The local solubility for homogeneous polynomials with random coefficients over thin sets}
\maketitle

\begin{abstract}

   Let $d$ and $n$ be natural numbers greater or equal to $2$. Let $\langle \boldsymbol{a}, \nu_{d,n}(\boldsymbol{x})\rangle\in \mathbb{Z}[\boldsymbol{x}]$ be a homogeneous polynomial in $n$ variables of degree $d$ with integer coefficients $\boldsymbol{a}$, where $\langle\cdot,\cdot\rangle$ denotes the inner product, and $\nu_{d,n}: \mathbb{R}^n\rightarrow \mathbb{R}^N$ denotes the Veronese embedding with $N=\binom{n+d-1}{d}$. Consider a variety $V_{\boldsymbol{a}}$ in $\mathbb{P}^{n-1}$, defined by $\langle \boldsymbol{a}, \nu_{d,n}(\boldsymbol{x})\rangle=0.$ In this paper, we examine a set of integer vectors $\boldsymbol{a}\in \Z^N$, defined by
    $$\mathfrak{A}(A;P)=\{ \boldsymbol{a}\in \Z^N:\ P(\boldsymbol{a})=0,\ \|\boldsymbol{a}\|_{\infty}\leq A\},$$
    where $P\in \mathbb{Z}[\x]$ is a non-singular form in $N$ variables of degree $k$ with $2 \le k\leq C({n,d})$ for some constant $C({n,d})$ depending at most on $n$ and $d$. 
    Suppose that $P(\boldsymbol{a})=0$ has a nontrivial integer solution. We confirm that the proportion of integer vectors $\boldsymbol{a}\in \Z^N$ in $\mathfrak{A}(A)$, whose associated equation $\langle \boldsymbol{a}, \nu_{d,n}(\boldsymbol{x})\rangle=0$ is everywhere locally soluble, converges to a constant $c_P$ as $A\rightarrow \infty.$ Moreover, for each place $v$ of $\Q$, if there exists a non-zero $\boldsymbol{b}_v\in \Q_v^N$ such that $P(\boldsymbol{b}_v)=0$ and the variety $V_{\boldsymbol{b}_v}$ in $\mathbb{P}^{n-1}$ admits a smooth $\mathbb{Q}_v$-point, the constant $c_P$ is positive.
    
    
 
\end{abstract}

\setcounter{tocdepth}{1}
\tableofcontents

\section{Introduction}\label{sec1}

In this article, we study the $p$-adic and real solubility for projective varieties defined by forms with integer coefficients. Denote by $f(\x)\in \Z[x_1,x_2,\ldots,x_n]$ a homogeneous polynomial of degree $d$ and let us write $V\subset \P^{n-1}$ for a projective variety defined by $f(\x)=0$. 
One may ask if there exists $n_0:=n_0(d)\in \N$ such that whenever $n> n_0(d),$ the variety $V$ admits a $\Q_p$-point. The current knowledge is accessible to this type of question.
In particular, it is known by Wooley [$\ref{ref102}$] that it suffices to take $n_0= d^{2^d}$ (see also  [$\ref{ref80}$], [$\ref{ref311}$], [$\ref{ref300}$]). As a different approach concerning the $p$-adic solubility for the variety $V$, the Ax--Kochen theorem [$\ref{ref701}$] shows that  a homogeneous polynomial $f(\x)\in \Z[x_1,\ldots,x_n]$ of degree $d$ with $n \geq d^2+1$  has a solution in $\Q_p^n \setminus \{\0\}$ for sufficiently large prime $p$ with respect to $d$ (see also $[\ref{ref801}]$). It is also known that if  $f(\x)\in \Z[x_1,x_2,\ldots,x_n]$ is an absolutely irreducible form of degree $d$ over $\mathbb{F}_p$ with a prime $p$ sufficiently large in terms of degree $d$, it follows by applying the Lang--Weil estimate (see [$\ref{ref100.2}$] and $[\ref{ref100.0}, \text{Theorem}\ 3]$) and the Hensel's lemma that the equation $f(\x)=0$ has a solution in $\Q_p^n\setminus\{\0\}$. As for the real solubility for the variety $V$, one sees that if the degree $d$ is odd, the variety $V$ always admits a real point. For the degree $d$ even, to the best of the authors' knowledge, it is not known how to determine, in general, whether the variety $V$ has a real point or not.

One infers from the previous paragraph that if the number of variables $n$ is not large enough, the main difficulties for verifying the $p$-adic solubility for the variety $V$ occur from the case of small prime $p.$ Nevertheless, thanks to the density lemma introduced in [$\ref{ref30}$, Lemma 20], we are capable of obtaining some information about the $p$-adic solubility for the variety $V$ even with small primes $p.$ In order to describe this information, we temporarily pause and introduce some notation. Let $n$ and $d$ be natural numbers with $d\geq 2. $ Let $N:=N_{d,n}=\binom{n+d-1}{d}.$ Let $\nu_{d,n}: \R^n\rightarrow \R^N$ denote the Veronese embedding, defined by listing all the monomials of degree $d$ in $n$ variables with the lexicographical ordering. 
Let $\langle\cdot,\cdot\rangle$ be the inner product on $\A^N$. We denote by $f_{\a}(\x):=\langle\a,\nu_{d,n}(\x)\rangle$ the homogeneous polynomial in $n$ variables of degree $d$ with coefficients $\a \in \Z^N$. Define $V\subset \P^{n-1}\times \A^N$ to be the subvariety given by $f_{\a}(\x)=0.$ Let $\pi: V\rightarrow \A^N$ be the projection onto the second factor. We denote by $V_{\a}:= \pi^{-1}(\a)$. For $A \in \R_{>0}$, we define
 \begin{equation}\label{set}
     \mathfrak{A}(A):=\{\a\in\Z^N:\ \a\in [-A,A]^N\}.
 \end{equation}
  The following quantity
   \begin{equation}\label{density}
 \varrho_{d,n}^{\text{loc}}(A):=\frac{\# \biggl\{\a\in  \mathfrak{A}(A) :\ V_{\a}\biggl(\R\times\displaystyle\prod_{p\ \text{prime}}\Z_p\biggr)\neq \emptyset \biggr\}}{\# \mathfrak{A}(A)}.
 \end{equation}
 is the proportion of $\a \in \mathfrak{A}(A)$ such that $V_{\a}$ is \textit{locally soluble}, i.e.~that admit a real point and a $p$-adic point for all primes $p$. 
 Thus, the behavior of $\varrho_{d,n}^{\text{loc}}(A)$ provides the information about $p$-adic solubility of the varieties $V_{\a}$ for $\a\in \mathfrak{A}(A),$ even with small primes $p$. We also define $c_{\infty}$ (resp.~$c_p$) to be the measure of $\a$ in $[-1,1]^N$ (resp.~in $\Z_p^N$), whose associated variety $V_{\a}$ admits a real point (resp.~a $p$-adic point).    By using the density lemmas [$\ref{ref30}$, Lemmas 20 and 21], Poonen and Voloch [$\ref{ref14}$, Theorem 3.6] proved that whenever $n,d\geq 2$, one has
\begin{equation}\label{Poonen}
   \lim_{A\rightarrow \infty}\varrho_{d,n}^{\text{loc}}(A)=c,
\end{equation}
where $c$ is the product of $c_{\infty}$ and $c_p$ for all primes $p$.

In this paper, we investigate the proportion of $\a$ with locally soluble $V_{\a}$ in the type I thin set in the sense of Serre given by the vanishing locus of $P(\mathbf{t})\in \Z[t_1,\dots, t_N]$. Here, we take $P$ to be a non-singular form in $N$ variables of degree $k\geq 2$. We define 
\begin{align*}
    \mathfrak{A}(A;P) := \{\a \in \mathfrak{A}(A): P(\a)=0 \}.
\end{align*}
Analogous to the quantity $\varrho_{d,n}^{\text{loc}}(A)$, we have the proportion of $\a$ in the thin set with locally soluble $V_{\a}$ defined as
 \begin{equation}\label{def1.1}
 \varrho_{d,n}^{P,\text{loc}}(A):=\frac{\# \biggl\{\a \in \mathfrak{A}(A;P) :  V_{\a}\biggl(\R\times\displaystyle\prod_{p\ \text{prime}}\Z_p\biggr)\neq \emptyset \biggr\}}{\#\mathfrak{A}(A;P)}.
 \end{equation}



Let $\mu_p$ denote the Haar measure on $\Z_p^N$ normalized to have total mass $1$. For given measurable sets $S_p\subseteq \Z_p^N$ and $S_{\infty}\subseteq \R^N$ with the Haar measure $\mu_p$ and the Lebesgue measure, we define
$$\sigma_{p}(S_p):=\lim_{r\rightarrow \infty}p^{-r(N-1)}\#\left\{\a\ (\text{mod}\ p^r)\middle|\ \a\in S_p\ \text{and}\ P(\a)\equiv0\ (\text{mod}\ p^r)\right\}$$
and
$$\sigma_{\infty}(S_{\infty})=\lim_{\eta\rightarrow 0+}(2\eta)^{-1}V_{\infty}(\eta),$$
where $V_{\infty}(\eta)$ is the volume of the subset of $\y\in S_{\infty}$ satisfying $|P(\y)|<\eta$. Furthermore, for $d_1, d_2 \in \N$, we define $C_{n,d}(d_1, d_2)$ to be the rational number such that \[C_{n,d}(d_1, d_2) =\frac{d!(n+d_1 - 1)!}{d_1!(n+d-1)!}+\frac{d!(n+d_2 - 1)!}{d_2!(n+d-1)!}.\]
Note that $C_{n,d}(d_1, d_2)$ belongs to $(0,1)$ and maximizes at $(1,d-1)$ and $(d-1,1)$; see Lemma \ref{lem.Cndmaximum}. Our first main theorem shows that the proportion $\varrho_{d,n}^{P,\text{loc}}(A)$ converges to the product of ``local proportions'' as $A \rightarrow \infty$.


\begin{te}\label{thm1.2}
Suppose that $P(\mathbf{t})\in \Z[t_1,\dots, t_N]$ is a non-singular form in $N$ variables of degree $k$ with $2 \le k< (1-C_{n,d}(1,d-1))N-1$ and $(k-1)2^k < N$. 
Suppose that  $P(\mathbf{t})=0$ has a nontrivial integer solution. For $n,d\geq 2$ such that
\begin{align*}
    (n,d)\neq (2,2),(2,3),(3,2),(3,3)
\end{align*}
one has
$$\lim_{A\rightarrow\infty}\varrho_{d,n}^{P,\text{loc}}(A)=c_P,$$
where
$$c_P:=\frac{\sigma_{\infty}(\pi(V(\R)) \cap [-1,1]^N)\cdot\prod_p \sigma_p(\pi(V(\Q_p)\cap \Z_p^N)}{\sigma_{\infty}([-1,1]^N)\cdot\prod_p \sigma_p(\Z_p^N)}.$$
\end{te}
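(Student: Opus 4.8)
The plan is to realize $\varrho_{d,n}^{P,\text{loc}}(A)$ as a ratio of two counting functions and to evaluate numerator and denominator separately via the circle method, thereby reducing everything to a product of local densities. The denominator $\#\mathbb{V}^P_{d,n}(A)$ counts integer points $\a\in[-A,A]^N$ on the hypersurface $P(\a)=0$, and since $P$ is non-singular of degree $k$ with $(k-1)2^k<N$, the Birch–Davenport circle method (in the quantitative form giving an asymptotic with power-saving error) yields $\#\mathbb{V}^P_{d,n}(A)=\mathfrak{S}\,\mathfrak{J}\,A^{N-k}+O(A^{N-k-\delta})$ for some $\delta>0$, where $\mathfrak{S}=\prod_p\sigma_p(\Z_p^N)$ is the singular series and $\mathfrak{J}=\sigma_\infty([-1,1]^N)$ is the singular integral, with $\mathfrak{S}\mathfrak{J}>0$ because by hypothesis $P=0$ has a nontrivial integer zero (and $P$ non-singular forces $p$-adic and real solubility in every residue ring). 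This gives the denominator.

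For the numerator, I would first use the density lemma of Poonen–Voloch (the analogue of [\ref{ref30}, Lemmas 20 and 21]) to reduce the condition ``$V_\a$ is everywhere locally soluble'' to a condition that is detected by congruences modulo a fixed modulus together with membership in $T_\infty$ up to a controlled boundary: concretely, for a suitable increasing sequence of moduli $M$, the set of $\a$ with $V_\a$ locally soluble differs from a finite union of residue classes mod $M$ (intersected with an explicit region approximating $T_\infty$) by a set of density tending to zero. The key input making this uniform is that the ``bad'' locus — the set of $\a$ for which $V_\a$ fails to be soluble at some prime $p$ but this failure is not visible modulo a bounded power of $p$ — has measure $\sigma_p(T_p\setminus(\text{finite-level approximation}))$ decaying in $p$, and one must check this decay survives the restriction to the thin set $P(\a)=0$. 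Then for each fixed residue class one applies the circle method again, now counting $\a\in[-A,A]^N$, $P(\a)=0$, $\a$ in a prescribed box/residue configuration; summing the resulting local densities over the configurations and passing to the limit produces $\sigma_\infty(T_\infty)\prod_p\sigma_p(T_p)$ in the numerator, whence the claimed $c_P$.

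**The main obstacle** I expect is controlling the tail uniformly over primes $p$ on the thin set. Over the full box $[-A,A]^N$, Poonen–Voloch get this from the fact that $\sigma_p(T_p)=1-O(p^{-\text{something}})$ and, more importantly, that the $p$-adic measure of $\a$ whose solubility is undecided at level $p^r$ shrinks geometrically in $r$ uniformly in $p$; on the thin set one is instead weighting by $\sigma_p$, which differs from $\mu_p$, so one must verify that $\sigma_p$ of these tail sets is still summable in $p$ and that the circle-method error terms, when summed over infinitely many residue classes of growing modulus, remain $o(A^{N-k})$. This is where the hypotheses $k<\lfloor(1-C_{n,d}(1,d-1))N\rfloor$ and $(k-1)2^k<N$ enter: the first bounds $k$ relative to the codimension of the relevant ``insoluble'' loci inside $\mathbb{P}^{N-1}$ (governed by the $C_{n,d}$ quantity, which measures the dimension of the space of forms $f_\a$ vanishing on a linear subspace — cf. Lemma \ref{lem.Cndmaximum}), ensuring the circle method has enough variables to handle the intersection $P=0$ with these loci with a power saving; the second is the Birch condition guaranteeing the circle method applies to $P$ itself.

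**Positivity of $c_P$** is then immediate from the displayed formula: $\sigma_\infty(T_\infty)>0$ always (for $d$ odd every $\a$ lies in $T_\infty$; for $d$ even a positive-measure open set of $\a$ gives real-soluble $V_\a$, e.g. those with a sign change), and if some $\b$ with $P(\b)=0$ gives $V_\b$ with a smooth $\Q$-rational point then by Hensel's lemma a $p$-adic neighborhood of $\b$ on $P=0$ consists of $\a$ with $V_\a(\Q_p)\neq\emptyset$ for each $p$, and this neighborhood contributes positively to $\sigma_p(T_p)$; since $\sigma_p(T_p)\to1$, the infinite product $\prod_p\sigma_p(T_p)$ converges to a positive limit, so $c_P>0$.
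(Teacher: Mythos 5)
Your overall strategy coincides with the paper's: write $\varrho_{d,n}^{P,\text{loc}}(A)$ as a ratio of counts on the thin set $P(\a)=0$, evaluate the denominator and every finite-level numerator condition (a real box together with congruence conditions at finitely many primes) by a Birch-type circle method with congruence conditions (this is Lemma \ref{lem1.31.31.3}, leading to the limit formula $(\ref{key})$), and then let the truncation parameter $M$ tend to infinity. Your first paragraph (the denominator) and the finite-level part of your second paragraph are in line with that.

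The genuine gap is the tail over primes $p\ge M$, which you correctly single out as ``the main obstacle'' but do not resolve; that obstacle is where the content of the theorem lies. What is needed, and what the paper supplies, is threefold. (i) A mechanism converting failure of $\Q_p$-solubility at a large prime into an algebraic condition modulo $p$: by Lang--Weil together with Hensel's lemma, if $f_{\a}$ is absolutely irreducible over $\mathbb{F}_p$ with $p$ large then $V_{\a}$ has a $\Z_p$-point, so insolubility at $p>M$ forces $\a\ (\mathrm{mod}\ p)\in Y(\mathbb{F}_p)$, where $Y$ is the locus of coefficient vectors of forms reducible over $\C$. Note that this is the full reducibility locus over all splitting types $(d_1,d_2)$, not the space of forms vanishing on a linear subspace as your parenthesis suggests, and its codimension bound $r>\lfloor(1-C_{n,d}(1,d-1))N\rfloor>k+1$ (Proposition \ref{reducible coefficients locus} together with Lemma \ref{lem.Cndmaximum}) is precisely where the hypothesis $k<\lfloor(1-C_{n,d}(1,d-1))N\rfloor$ enters. (ii) An Ekedahl/Bhargava-type geometric sieve restricted to the thin set: the number of $\a\in[-A,A]^N$ with $P(\a)=0$ and $\a\ (\mathrm{mod}\ p)\in Y(\mathbb{F}_p)$ for some prime $p>M$ is $\ll A^{N-k}/(M^{r-k-1}\log M)+A^{N-r+1}$ (Lemma \ref{lem2.42.4}). (iii) The input making (ii) work: a bound $\ll (A/p)^{N-k}$, uniform over primes $p\le A$ and over residue classes modulo $p$, for the number of solutions of $P(\a)=0$ in the box lying in a prescribed class (Lemma \ref{lem2.5}). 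Point (iii) is not covered by ``applying the circle method again for each residue class'': the asymptotic of Lemma \ref{lem1.31.31.3} has an error term whose dependence on the modulus is uncontrolled, whereas the sieve needs moduli as large as $A$, and the paper proves it by a separate major/minor-arc argument after the rescaling $\alpha=\beta/Q^{k}$. Without (i)--(iii), your statement that the bad $p$-adic density ``decays in $p$ and one must check this survives restriction to the thin set'' remains an assertion rather than a proof. Finally, your closing paragraph on positivity of $c_P$ concerns Theorem \ref{thm1.21.2}, not the present statement, and the claim that $\sigma_p(T_p)\to 1$ on the thin set itself depends on the same ingredients (i)--(iii), so it cannot be invoked for free.
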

\begin{rmk}
Since $P(\t)$ is a non-singular form in $N$ variables of degree $k$ with $(k-1)2^k<N$ and $P(\t)=0$ has a nontrivial integer solution, the classical argument (see $[\ref{ref8}]$ and $[\ref{ref26}, \text{the proof of Theorem 1.3}]$) reveals that the quantity
    $ \sigma_{\infty}([-1,1]^N)\cdot\prod_{p\ \text{prime}} \sigma_p(\Z_p)$ is convergent and is bounded above and below by non-zero constants, respectively, depending on the polynomial $P(\t)$. Then, on observing that 
    $$0\leq \sigma_{\infty}(\pi(V(\R)) \cap [-1,1]^N)\leq \sigma_{\infty}([-1,1]^N)\ \text{and}\ 0\leq \sigma_p(\pi(V(\Q_p)\cap \Z_p^N)\leq \sigma_p(\Z_p),$$ 
    we infer that
    the infinite product $\sigma_{\infty}(\pi(V(\R)) \cap [-1,1]^N)\cdot\prod_p \sigma_p(\pi(V(\Q_p)\cap \Z_p^N)$ in the numerator of $c_P$ converges to a non-negative constant.
\end{rmk}
In [$\ref{ref3121}$, Corollary 1.6], Browning and Heath-Brown obtained the same constant for the case that $P$ is a quadratic form of rank at least $5.$ The modicum computation reveals that when $P$ is a non-singular quadratic form $(k=2)$, the conclusion of Theorem 1 holds for $N\geq 5$. Hence, we notice that the conclusion of Theorem 1.1 coincides with $[\ref{ref3121}$, Corollary 1.6] for non-singular quadratic forms. Furthermore, we emphasize that one could deal with $P$ a quadratic form of rank at least $5$ and obtain the same result in $[\ref{ref3121}$, Corollary 1.6], by using the argument described here. Additionally, we note that the argument in this paper seems plausible to be generalized for obtaining analogous results for thin sets defined by a system of non-singular forms.

Our second main theorem shows that under an additional condition on the polynomial $P$ defining the thin set, the product of local proportions is strictly positive.

\begin{te}\label{thm1.21.2}
    In addition to the setup of Theorem \ref{thm1.2}, suppose that for each place $v$ of $\Q$, there exists $\b_v\in \Q_v^N\setminus\{\0\}$ 
    such that the variety $V_{\b_v}$ in $\P^{n-1}_{\Q_v}$ 
    admits a smooth $\Q_v$-point. 
    Then, the constant $c_P$ is positive.
\end{te}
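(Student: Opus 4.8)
The plan is to prove that the numerator $\sigma_\infty(T_\infty)\cdot\prod_p\sigma_p(T_p)$ of $c_P$ is strictly positive; by the Remark after Theorem~\ref{thm1.2} the denominator $\sigma_\infty([-1,1]^N)\cdot\prod_p\sigma_p(\Z_p)$ is a positive constant, so in particular each $\sigma_p(\Z_p)>0$ and $\prod_p\sigma_p(\Z_p)>0$. Hence it suffices to show (a) $\sigma_\infty(T_\infty)>0$, (b) $\sigma_p(T_p)>0$ for every prime $p$, and (c) $\prod_p\bigl(\sigma_p(T_p)/\sigma_p(\Z_p)\bigr)>0$, after which $\prod_p\sigma_p(T_p)=\prod_p\sigma_p(\Z_p)\cdot\prod_p\bigl(\sigma_p(T_p)/\sigma_p(\Z_p)\bigr)>0$. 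Two facts will be used throughout: since $\{P=0\}$ is a cone and $V_{\lambda\b}=V_\b$, the given $\b$ may be rescaled into $(-1,1)^N$, and $\b$ is a smooth point of $\{P=0\}$ since $P$ is non-singular with $\b\neq\0$; moreover, clearing denominators in the smooth $\Q$-point of $V_\b$ gives a primitive $\x_0\in\Z^n$ with $f_\b(\x_0)=0$ and $\partial_{x_j}f_\b(\x_0)\neq0$ for some fixed $j$, which will serve at every place.

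For the archimedean place, I would apply the implicit function theorem to $(\a,t)\mapsto f_\a(x_{0,1},\dots,t,\dots,x_{0,n})$ near $(\b,x_{0,j})$ to obtain an open $U\subseteq(-1,1)^N$ containing $\b$ with $U\subseteq T_\infty$; since $\b$ is a smooth point of $\{P=0\}$, the coarea formula gives $\sigma_\infty(U)=\int_{\{P=0\}\cap U}|\nabla P|^{-1}\,d\mathcal{H}^{N-1}>0$, so $\sigma_\infty(T_\infty)\ge\sigma_\infty(U)>0$. For a prime $p$, set $e:=v_p(\partial_{x_j}f_\b(\x_0))<\infty$; for any $\a\equiv\b\pmod{p^{2e+1}}$ one has $f_\a(\x_0)\equiv f_\b(\x_0)=0\pmod{p^{2e+1}}$, so Hensel's lemma in the variable $t$ produces a primitive zero of $f_\a$ in $\Z_p^n$, whence $\b+p^{2e+1}\Z_p^N\subseteq T_p$; since $\b$ is a smooth $\Q_p$-point of $\{P=0\}$, a standard Hensel count shows $\sigma_p$ is positive on every ball about $\b$, giving $\sigma_p(T_p)>0$. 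This proves (a) and (b).

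For (c), since each factor is positive it is enough to show $\sum_p\bigl(1-\sigma_p(T_p)/\sigma_p(\Z_p)\bigr)<\infty$, for which it suffices that $\sigma_p(T_p)\ge\bigl(1-O(p^{-2})\bigr)\sigma_p(\Z_p)$ for all large $p$. For $p$ large in terms of $n$ and $d$, Hensel's lemma and the Lang--Weil estimate put $\a$ into $T_p$ whenever the reduction $\overline{f_\a}$ over $\bF_p$ has a geometrically irreducible component of multiplicity one; the exceptional $\a$ lie over a ``bad locus'' $B\subset\A^N$ consisting of forms all of whose multiplicity-one factors are geometrically reducible (non-reduced forms, norm forms, and so on). One checks that $B$ is a proper closed subvariety whose every component has codimension at least $3$ in $\A^N$, hence $\dim(B\cap\{P=0\})\le N-3$; counting $\bF_p$-points of $\{P=0\}$ and of $B\cap\{P=0\}$ by Lang--Weil then gives $1-\sigma_p(T_p)/\sigma_p(\Z_p)=O(p^{-2})$. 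The hypotheses $2\le k<\lfloor(1-C_{n,d}(1,d-1))N\rfloor$ and $(k-1)2^k<N$ enter here: via Lemma~\ref{lem.Cndmaximum} and the identity $C_{n,d}(1,d-1)N=N_{1,n}+N_{d-1,n}$ they force $n$ large relative to $d$ (so that $B$ and the other degenerate loci of $f_\a$ have high codimension) and keep $\deg P$ below that codimension, which is exactly what makes the Lang--Weil count — and, more broadly, the circle-method input used in the proof of Theorem~\ref{thm1.2} — applicable on the thin set $\{P=0\}$.

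The main obstacle is this last step. One must identify the bad locus precisely, execute the Lang--Weil point count uniformly in $p$, and — most delicately — translate the arithmetic bound on $k$ into the geometric statement that $\{P=0\}$ meets the degenerate loci in codimension at least $2$, so that the per-prime defect is $O(p^{-2})$ and not merely $O(p^{-1})$; an $O(p^{-1})$ bound would be useless, since it would allow $\prod_p\sigma_p(T_p)=0$ even with every factor positive. Granting the $O(p^{-2})$ bound, $\prod_p\sigma_p(T_p)$ is a positive multiple of $\prod_p\sigma_p(\Z_p)>0$, so the numerator of $c_P$ is positive and $c_P>0$.
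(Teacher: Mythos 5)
Your archimedean and fixed-prime steps (a) and (b) coincide in substance with the paper's Proposition~\ref{lem1.3} and the way it is used: Hensel's lemma at a smooth integral point of $V_{\b}$ gives a ball $B_p(\b,\eta_p)\subseteq T_p$, the implicit function theorem gives a real box inside $T_{\infty}$ after rescaling $\b$, and positivity of the local densities of these small boxes along $\{P=0\}$ comes from $\b$ being a smooth point of $\{P=0\}$ (the paper quotes this rather than using the coarea formula, but it is the same point). Where you genuinely diverge is the tail over large primes: the paper never proves a per-prime bound $1-\sigma_p(T_p)/\sigma_p(\Z_p)=O(p^{-2})$. Instead it shows, for $M$ large, that $\prod_{p\ge M}\sigma_p(T_p)/\sigma_p(\Z_p)>1/2$ directly, by identifying this ratio via the Birch-type asymptotic of Lemma~\ref{lem1.31.31.3} (i.e.\ the limit formula (\ref{key})) with the limiting proportion of $\a$ with $P(\a)=0$ that are $\Z_p$-soluble for all $p\in[M,M_1)$, and bounding the complementary proportion by $O\bigl(1/(M^{r-k-1}\log M)\bigr)$ uniformly in $M_1$ using the geometric sieve estimate of Lemma~\ref{lem2.42.4} applied to the reducible locus $Y$ of Proposition~\ref{reducible coefficients locus}; no control of individual primes is needed.

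The gap in your write-up is exactly the step you flag and then ``grant'': the assertion that your bad locus $B$ is a closed subvariety of codimension at least $3$, together with the uniform-in-$p$ conversion of this into $1-\sigma_p(T_p)/\sigma_p(\Z_p)=O(p^{-2})$. Since this is the entire content of the positivity of the tail product, the proof as written is incomplete. It is repairable from the paper's own ingredients: any form with no multiplicity-one absolutely irreducible factor is in particular not absolutely irreducible, so $B\subseteq Y$, and Proposition~\ref{reducible coefficients locus} combined with the hypothesis $2\le k<\lfloor(1-C_{n,d}(1,d-1))N\rfloor$ gives $\codim Y=r\ge k+2\ge 4$, hence $\dim\bigl(Y\cap\{P=0\}\bigr)\le N-4$ and $\#Y(\mathbb{F}_p)\ll p^{N-r}$ uniformly in $p$; one must then still carry out the Hensel bookkeeping relating the mod $p^m$ counts defining $\sigma_p$ to the mod $p$ counts (using that the reduction of $\{P=0\}$ is smooth away from the origin for $p$ large) to obtain a defect $O(p^{-(r-1)})=O(p^{-2})$. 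Finally, your heuristic that the hypotheses ``force $n$ large relative to $d$'' is not what is going on: the codimension bound of Proposition~\ref{reducible coefficients locus} is unconditional, and the upper bound on $k$ enters only to guarantee $\lfloor(1-C_{n,d}(1,d-1))N\rfloor\ge 3$ (and, in the paper's route, $r>k+1$ so that the sieve bound beats the $A^{N-k}$ count on the thin set).
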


\section*{Notation}

For a given vector $\boldsymbol{v}\in \R^N$, we write the $i$-th coordinate of $\boldsymbol{v}$ by $(\boldsymbol{v})_i$ or $v_i$. We use $\langle\cdot,\cdot\rangle$ for the inner product. We write $0\leq \x\leq X$ or $\x\in [0,X]^s$ to abbreviate the condition $0\leq x_1,\ldots,x_s\leq X$. For a prime $p$ and vectors $\boldsymbol{v}\in \R^n$, we use $p^h\|\boldsymbol{v}$ when one has $p^h|v_i$ for all $1\leq i\leq n$ but $p^{h+1}\nmid v_i$ for some $1\leq i\leq n.$ Throughout this paper, we use $\gg$ and $\ll$ to denote Vinogradov's well-known notation, and write $e(z)$ for $e^{2\pi iz}$. We use $A\asymp B$ when both $A\gg B$ and $A\ll B$ hold. We adopt the convention that when $\epsilon$ appears in a statement, then the statement holds for each $\epsilon>0$, with implicit constants depending on $\epsilon.$

\section*{Acknowledgement}
The authors acknowledge support from NSF grant DMS-2001549 under the supervision of Trevor Wooley. The third author would like to thank James Cumberbatch and Dong-ha Kim for helpful discussion. Especially, the third author would like to thank Trevor Wooley for his constant encouragement to complete this work. Furthermore, the authors would like to thank Daniel Loughran and Nicholas Rome for letting us know many related works and providing very helpful comments on overall arguments in this paper.
\addtocontents{toc}{\protect\setcounter{tocdepth}{2}}


\section{Preliminaries}\label{sec2}

Throughout this section, we fix a non-singular form $P(\mathbf{t})\in \Z[t_1,\dots, t_N]$ in $N$ variables of degree $k\ge 2$. We also assume that $N>(k-1)2^k$. Let $\mathfrak{B}$ be a box in $[-1,1]^N\cap\R^N$.  For given $A,B>0$ and $\r \in\Z^N$ with $0\leq \r\leq B-1$, we define
$$\mathcal{N}(A,\mathfrak{B},B,\r,P)=\#\left\{\a\in A\mathfrak{B}\cap \Z^N\middle|\ P(\a)=0,\ \a\equiv \r\ (\text{mod}\ B)\right\}.$$ The first lemma in this section provides the asymptotic formula for $$\mathcal{N}(A,\mathfrak{B},B,\r,P) \ \text{as}\ A\rightarrow \infty.$$

In advance of the statement of Lemma $\ref{lem1.31.31.3}$, we define $v_p(L)$ with $L\in \Z$ and $p$ prime as the integer $s$ such that $p^{s}\| L.$
\begin{lem}\label{lem1.31.31.3}
    Suppose that $B$ is a natural number.  Then, for a given $\r\in \Z^N$ with $0\leq \r\leq B-1$ and for sufficiently large $A>0$, there exists $\delta>0$ such that 
    \begin{equation*}
    \begin{aligned}
      \mathcal{N}(A,\mathfrak{B},B,\r,P)=\prod_{p\nmid B}\sigma_{p}\cdot\prod_{p\mid B }\sigma^{B,\r}_{p}\cdot\sigma_{\infty}\cdot A^{N-k}+O(A^{N-k-\delta}), 
    \end{aligned}
    \end{equation*}
where 
\begin{equation*}
    \begin{aligned}
        \sigma_p&:=\lim_{l\rightarrow \infty}p^{-l(N-1)}\#\left\{1\leq \a\leq p^l:\ P(\a)\equiv0\ (\text{mod}\ p^l)\right\},\\
       \sigma^{B,\r}_{p}&:=\lim_{l\rightarrow \infty}p^{-l(N-1)}\#\left\{1\leq \a\leq p^l:\ P(\a)\equiv0\ (\text{mod}\ p^l),\ \a\equiv\r\ (\text{mod}\ p^{v_p(B)})\right\}
    \end{aligned}
\end{equation*}
and
$$\sigma_{\infty}:=\sigma_{\infty}(\mathfrak{B})=\lim_{\eta\rightarrow 0+}(2\eta)^{-1}V_{\infty}(\eta)$$
in which $V_{\infty}(\eta)$ is the volume of the subset of $\y\subseteq \mathfrak{B}$ satisfying $|P(\y)|<\eta.$  In particular, the implicit constant in $O(A^{N-k-\delta})$ depends on $B$ and $P(\mathbf{t}).$
\end{lem}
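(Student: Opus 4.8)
The plan is to establish the asymptotic formula for $\mathcal{N}(A,\mathfrak{B},B,\r,P)$ by applying the Hardy--Littlewood circle method, following closely the classical treatment of Birch for systems of forms (here a single form $P$ of degree $k$ in $N$ variables with $N > (k-1)2^k$), and then incorporating the congruence condition $\a \equiv \r \pmod{B}$ and the primitivity restriction by standard modifications. First I would remove the primitivity condition via Möbius inversion: write $\mathcal{N}(A,\mathfrak{B},B,\r,P)$ in terms of counts over all $\a \in A\mathfrak{B} \cap \Z^N$ with $P(\a)=0$ and $\a\equiv \r \pmod B$, by summing over $e \geq 1$ with $e\a'$ in place of $\a$; because $P$ is homogeneous of degree $k$, the equation $P(e\a')=0$ is equivalent to $P(\a')=0$, so the $e$-sum produces the factor $\sum_{e} e^{-(N-k)}\cdot(\text{main term for }\a')$, but one must be careful that the congruence $e\a'\equiv \r\pmod B$ interacts with $e$; since $\gcd(\r,B)$ governs which $e$ are admissible, I would first reduce to the case $\gcd(r_1,\dots,r_N,B)$ manageable, or equivalently track the dependence through the local densities, ultimately yielding the factor $\zeta(N-k)^{-1}$ after combining with the main term.

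Next, for the count with the congruence condition but without primitivity, I would set up the exponential sum $\sum_{\a \in A\mathfrak{B}\cap\Z^N,\ \a\equiv\r\, (B)} e(\alpha P(\a))$ and integrate over $\alpha \in [0,1)$. Writing $\a = \r + B\b$ converts this to an exponential sum in $\b$ over a box of side $\asymp A/B$ for the form $P(\r+B\b)$, which is a polynomial of degree $k$ in $\b$ whose leading form is $B^k P(\b)$ — in particular still non-singular of degree $k$. I would then run Birch's argument for this polynomial: a Weyl-differencing estimate for the minor arcs (valid because the singular locus of the leading form has the right codimension, guaranteed by non-singularity of $P$, and because $N > (k-1)2^k$ gives enough variables for the minor-arc contribution to be $O(A^{N-k-\delta})$), and on the major arcs the standard completion of the singular series and singular integral. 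The singular series assembles into $\prod_p \sigma_p^{\text{loc}}$ where the local factor at $p \in M$ is exactly the restricted density $\sigma_p^{B,\r}$ (the congruence $\a\equiv\r\pmod{p^{v_p(B)}}$ is forced by the substitution $\a=\r+B\b$), and at $p \notin M$ it is the unrestricted $\sigma_p$; the singular integral becomes $\sigma_\infty(\mathfrak{B})$ after rescaling (the $B^k$ from the leading form and the $B^{-N}$ from the $\b$-box volume versus the $\a$-box volume cancel appropriately against the $B$-dependence in the singular series, so the final main term has no stray power of $B$, only an implied constant depending on $B$).

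The main obstacle I anticipate is the bookkeeping in two places: first, verifying that the minor-arc (Weyl) estimates for the shifted polynomial $P(\r + B\b)$ are uniform enough that the error term $O(A^{N-k-\delta})$ holds with $\delta$ independent of $A$ (though the implied constant may and does depend on $B$ and $P$) — this requires checking that the $h$-invariant or the codimension of the singular locus is unchanged by the shift, which follows since only the leading form $B^k P(\b)$ matters and $P$ is non-singular; and second, correctly matching the product of local densities — after Möbius inversion for primitivity and after the substitution handling the congruence — to the claimed expression $\zeta(N-k)^{-1}\prod_{p\notin M}\sigma_p \prod_{p\in M}\sigma_p^{B,\r}\,\sigma_\infty$. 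For this I would compute the singular series $\mathfrak{S}$ of the $\b$-equation as an Euler product, note that for $p \notin M$ the local factor equals the usual $\sigma_p$ while for $p \in M$ the shift by $\r$ and scaling by $B$ localizes it to $\sigma_p^{B,\r}$, and finally absorb the primitivity Möbius sum $\sum_{e \geq 1, \gcd(e,B)=1}\mu(e)$-type factor together with the $e$-power sum to reconstitute the full Euler product over all primes divided by $\zeta(N-k)$; the coprimality constraint $\gcd(e,B)=1$ arising from the congruence is exactly what is needed so that the primes in $M$ keep their restricted factor $\sigma_p^{B,\r}$ and the primes outside contribute $\sigma_p$ with the $\zeta(N-k)^{-1}$ emerging from $\prod_{p\notin M}(1-p^{-(N-k)})$ times the unrestricted local series. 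I would conclude by assembling these pieces, with the error term inheriting the $B$- and $P$-dependence from the minor-arc constants.
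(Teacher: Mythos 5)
Your proposal follows essentially the same route as the paper, which records this lemma by appealing to Birch's circle method adapted to the congruence condition $\a\equiv\r\ (\mathrm{mod}\ B)$, M\"obius inversion to impose primitivity (producing the factor $\zeta(N-k)^{-1}$), Schmidt's device for replacing the singular integral by the real density $\sigma_{\infty}(\mathfrak{B})$, and the splitting of the $p$-adic densities into $\sigma_p^{B,\r}$ for $p\in M$ and $\sigma_p$ for $p\notin M$ via coprimality. If anything, you treat the interaction between the M\"obius parameter and the congruence modulus more explicitly than the paper's sketch does, so there is no gap to report.
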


We record this lemma without proof because it is readily obtained by the previous results as follows. By repeating the argument of Birch in [$\ref{ref8}$] replaced with the variables $\x$ imposed on the congruence condition $\x\equiv\r\ (\text{mod}\ B)$, we obtain an asymptotic formula for the number of integer solutions $\x\in [-A,A]^N$ of $P(\x)=0$ with the congruence condition $\x\equiv\r\ (\text{mod}\ B)$ as $A\rightarrow \infty$.  The main term of this asymptotic formula includes the product of $p$-adic densities and the singular integral. By applying the strategy proposed by Schmidt [$\ref{ref9}$, $\ref{ref10}$] (see also a refined version [$\ref{ref11}$, section 9]), the singular integral can be replaced by the real density $\sigma_{\infty}$. Furthermore, we readily deduce by the coprimality between $p$ and $B$ that the product of $p$-adic densities in the main term becomes $\prod_{p\notin M}\sigma_{p}\cdot\prod_{p\in M }\sigma^{B,\r}_{p}$. Thus, this yields the asymptotic formula for $N(A,\mathfrak{B},B,\r,P)$ as desired in Lemma $\ref{lem1.31.31.3}$. For a detailed proof, see also [$\ref{ref1555}$, Lemma 4.4].

\begin{lem}\label{lem2.5}
     Suppose that $A$ and $Q$ are positive numbers with $Q\leq A$.  Then, for a given $\c\in \Z^N$ with $1\leq \c\leq Q$, we have
    $$\#\left\{\a\in [-A,A]^N\cap \Z^N\middle|\ P(\a)=0\ \text{and}\ \a\equiv \c\ (\text{mod}\ Q)\right\}\ll (A/Q)^{N-k},$$
    where the implicit constant may depend on $P(\mathbf{t}).$
\end{lem}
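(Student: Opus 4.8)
The plan is to reduce the count to a lattice-point problem on a box of side $\asymp A/Q$ and then run the Hardy--Littlewood circle method, arranging every implied constant to be independent of $\c$ and $Q$. First I would substitute $\a=\c+Q\b$: the condition $\a\in[-A,A]^N$ with $\a\equiv\c\ (\mathrm{mod}\ Q)$ becomes $\b\in\mathfrak{B}'$ for a box $\mathfrak{B}'\subset\R^N$ of side length $2A/Q=:2X$, where $X\geq C$ is large since $CQ\leq A$, while $P(\a)=0$ becomes $F(\b):=P(\c+Q\b)=0$. Since $P$ is a non-singular form, the degree-$k$ part of $F$ is $Q^kP(\b)$, which is again non-singular; in particular $F\not\equiv 0$ and the leading form of $F$ satisfies Birch's condition $N>(k-1)2^k$. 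By orthogonality,
\[
\#\left\{\a\in[-A,A]^N\cap\Z^N\ \middle|\ P(\a)=0,\ \a\equiv\c\ (\mathrm{mod}\ Q)\right\}=\int_0^1 T(\alpha)\,d\alpha,\qquad T(\alpha):=\sum_{\b\in\mathfrak{B}'\cap\Z^N}e\bigl(\alpha F(\b)\bigr),
\]
so it suffices to prove $\int_0^1|T(\alpha)|\,d\alpha\ll X^{N-k}$.

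For the minor arcs, the key point is that $(k-1)$-fold Weyl differencing of $T$ in the $\b$-variable produces sums over difference parameters $\g_1,\dots,\g_{k-1}\in\Z^N$ with $|\g_i|\ll X$ in which the inner sum over $\b$ is a product of one-dimensional geometric series controlled by $\|Q^k\alpha\,\Psi_j(\g_1,\dots,\g_{k-1})\|$, where $\Psi_j$ is the $j$-th coordinate of the integral symmetric multilinear form attached to $P$; the lower-degree terms of $F$, and hence all of the $\c$-dependence, enter only through a unimodular phase and through the (harmless) centres of the geometric series. Consequently Birch's minor-arc analysis for the form $P$ applies verbatim, with the circle-method variable effectively rescaled as $\alpha\mapsto Q^k\alpha$ and with all implied constants depending only on $P$, $N$ and $k$; using $N>(k-1)2^k$ one obtains $|T(\alpha)|\ll X^{N-k-\delta}$ for some $\delta>0$ whenever $Q^k\alpha$ admits no approximation $a/q$ with $q\ll X^{\theta}$ and $|Q^k\alpha-a/q|\ll X^{\theta-k}$ for a suitable small $\theta>0$. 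Pulling this back to $\alpha$-space (the exceptional set there is a union of $Q^k$ translates by $j/Q^k$ of a single set of measure $1-o(1)$), the minor-arc contribution to $\int_0^1 T(\alpha)\,d\alpha$ is $O(X^{N-k-\delta})$.

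On the remaining major arcs the standard approximation gives $T(\alpha)=q^{-N}S_{a,q}(F)\,I(\alpha-a/q)+(\text{error})$, and summing these contributions yields a main term $\mathfrak{S}(F)\,\mathfrak{J}(F,\mathfrak{B}')$ up to an admissible error; the heart of the matter, and the step I expect to be most delicate, is to bound $\mathfrak{S}(F)\,\mathfrak{J}(F,\mathfrak{B}')$ by $O(X^{N-k})$ \emph{uniformly in $\c$ and $Q$}. For the singular integral this is elementary: undoing $\a=\c+Q\b$ and rescaling $\a=A\y$ gives the exact identity $\mathfrak{J}(F,\mathfrak{B}')=Q^{-k}\,\sigma_\infty([-1,1]^N)\,X^{N-k}$, which even carries a spare factor $Q^{-k}$. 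For the singular series $\mathfrak{S}(F)=\prod_p\sigma_p(F)$ one has $\sigma_p(F)=\sigma_p(P)$ for $p\nmid Q$, so $\prod_{p\nmid Q}\sigma_p(F)\ll 1$ by the absolute convergence afforded by $N>(k-1)2^k$; and for $p^s\|Q$ the substitution $\a=\c+Q\b$ modulo $p^r$ yields
\[
\sigma_p(F)=p^{sN}\lim_{r\to\infty}p^{-r(N-1)}\#\left\{\a\ (\mathrm{mod}\ p^r)\ \middle|\ \a\equiv\c\ (\mathrm{mod}\ p^s),\ P(\a)\equiv 0\ (\mathrm{mod}\ p^r)\right\},
\]
which one estimates by pulling out the largest common $p$-power from $\c$ and applying Hensel's lemma at the primes not dividing the discriminant of $P$ (the finitely many remaining primes contributing only a bounded factor), obtaining $\sigma_p(F)\ll p^{sk}$ and hence $\prod_{p\mid Q}\sigma_p(F)\ll Q^k$. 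Thus the main term is $\ll Q^k\cdot Q^{-k}X^{N-k}=O(X^{N-k})$, and combining with the minor-arc bound gives $\int_0^1|T(\alpha)|\,d\alpha\ll X^{N-k}=O\bigl((A/Q)^{N-k}\bigr)$, as required. Alternatively one could bypass the precise main-term computation, at the cost of a slightly more intricate major-arc estimate, by combining the minor-arc bound with a uniform estimate $|S_{a,q}(F)|\ll q^{N-1-\eta}$ for the complete exponential sums.
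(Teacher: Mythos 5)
Your reduction is the same as the paper's up to the major arcs: substitute $\a=\c+Q\b$ (equivalently rescale the circle variable by $Q^{k}$), note that the resulting polynomial has non-singular leading form proportional to $P$, and run Weyl/Birch on a box of side $\asymp A/Q$, so the minor-arc estimate is indeed uniform in $\c$ and $Q$. The divergence, and the gap, is in how the major arcs are handled. The paper never forms the main term $\mathfrak{S}(F)\mathfrak{J}(F)$ for $F(\b)=P(\c+Q\b)$: after the change of variable $\alpha=\beta/Q^{k}$ it bounds $Q^{-k}\int_{0}^{Q^{k}}$ by the supremum of $\int_{l-1}^{l}S(\beta)\,d\beta$ over the $Q^{k}$ unit intervals and proves the single-interval bound $\ll (A/Q)^{N-k}$ by Birch's argument applied to $P(\y)+g(\y)$ (only the leading form matters), so all dependence on $Q$ and $\c$ cancels for free. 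You instead keep $\alpha\in[0,1]$ and must prove $\mathfrak{S}(F)\mathfrak{J}(F)\ll (A/Q)^{N-k}$ uniformly; your singular-integral identity is fine, but the bound $\prod_{p\mid Q}\sigma_p(F)\ll Q^{k}$, which you yourself identify as the heart of the matter, is only gestured at and as sketched does not go through.

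Concretely: (i) at the finitely many primes dividing the discriminant your ``bounded factor'' must be uniform in $s=v_p(Q)$, which can be arbitrarily large, and a primitive $\c$ may reduce to a singular point of $P$ modulo such a $p$, so plain Hensel lifting does not apply; one needs something like a Nullstellensatz statement that $\min_j v_p\bigl(\partial_j P(\c)\bigr)$ is bounded (by $v_p$ of a fixed resultant) for all primitive $\c$, combined with iterating the pull-out-$p$-powers step, to get $\sigma_p(F)\ll_p p^{ks}$ uniformly in $s$ and $\c$. (ii) Even at good primes you must control the product of the per-prime constants: a bound $\sigma_p(F)\le C_0\,p^{k v_p(Q)}$ with an absolute $C_0>1$ yields an extra $C_0^{\omega(Q)}$, which cannot be absorbed into $(A/Q)^{N-k}$ because the lemma allows $Q$ as large as $A/C$, where $A/Q$ stays bounded while $\omega(Q)\to\infty$; you need the sharper form $\sigma_p(F)\le p^{k v_p(Q)}\max\bigl(1,\sigma_p(P)\bigr)$ (which is what the worst case $\c\equiv\boldsymbol{0}\ (\mathrm{mod}\ p^{s})$ actually gives), so that the constants multiply to the convergent singular series of $P$. (iii) Your fallback via a uniform bound $|S_{a,q}(F)|\ll q^{N-1-\eta}$ is false for moduli sharing factors with $Q$: for $\c\equiv\boldsymbol{0}\ (\mathrm{mod}\ Q)$ one has $F=Q^{k}P(\,\cdot+\mathbf{1})$ and $S_{a,q}(F)=q^{N}$ for every $q\mid Q^{k}$. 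None of this is unfixable—the local bound is true and provable along the lines you indicate—but as written the uniform singular-series estimate is a genuine missing step, and the paper's device of averaging over unit intervals after rescaling by $Q^{k}$ avoids it entirely; I would recommend adopting that normalization.
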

\begin{proof}
    By orthogonality, we have
    \begin{equation}\label{ortho2.17}
        \begin{aligned}
          &  \#\left\{\a\in [-A,A]^N\cap \Z^N\middle|\ P(\a)=0\ \text{and}\ \a\equiv \c\ (\text{mod}\ Q)\right\}\\
           & =\dint_0^1\dsum_{-A\leq Q\y+\c\leq A}e(\alpha P(Q\y+\c))d\alpha.
        \end{aligned}
    \end{equation}
    By change of variable $\alpha=\beta /Q^k,$ the last expression is seen to be 
    \begin{equation}\label{exp2.17}
    \begin{aligned}
      &  Q^{-k}\dint_0^{Q^k} \dsum_{-A\leq Q\y+\c\leq A}e(\beta P(\y+\c/Q))d\beta\\
       &\ll \sup_{\substack{1\leq l\leq Q^k\\ l\in \N}}\dint_{l-1}^l \dsum_{-A\leq Q\y+\c\leq A}e(\beta P(\y+\c/Q))d\beta\\
       &=\sup_{\substack{1\leq l\leq Q^k\\ l\in \N}}\dint_{l-1}^l \dsum_{-A\leq Q\y+\c\leq A}e(\beta( P(\y)+ g(\y)))d\beta,
    \end{aligned}
    \end{equation}
    where $g\in \Q[\y]$ is a polynomial of degree at most $k-1.$

On writing that $S(\beta)=\sum_{-A\leq Q\y+\c\leq A}e(\beta(P(\y)+g(\y))),$ we claim that whenever $N>(k-1)2^k$, one has
\begin{equation}\label{2.32.3}
    \dint_{l-1}^l S(\beta) d\beta \ll (A/Q)^{N-k},
\end{equation}
uniformly in $l.$ Suppose that the inequality $(\ref{2.32.3})$ holds. Then, on substituting $(\ref{2.32.3})$ into the last expression of $(\ref{exp2.17})$, it follows from $(\ref{ortho2.17})$ and $(\ref{exp2.17})$ that we complete the proof of Lemma 2.2.

It remains to verify the inequality $(\ref{2.32.3})$.   For a given $l\in \Z,$ we define the major arcs $\mathfrak{M}^l(H)$ by
$$\mathfrak{M}^l(H)=\bigcup_{\substack{0\leq a\leq q\leq H\\ (q,a)=1}}\mathfrak{M}^l(H,q,a),$$
    where 
    $$\mathfrak{M}^l(H,q,a)=\left\{\beta\in [l-1,l):\ \left|\beta-(l-1)-\frac{a}{q}\right|\leq \frac{H}{q(A/Q)^{k}}\right\}.$$
    Furthermore, we define the minor arcs $\mathfrak{m}^l:=[l-1,l)\setminus \mathfrak{M}^l.$
    
For simplicity, we put $L=A/Q$. Let $\delta$ be a positive number with $\delta<2^{-1-k}$. Define $I\in\N$ to be the minimum number satisfying $2^IL^{\delta}>(1/4)L^{k/2}$. We notice here that $I=O_{k,\delta}(\log L).$ By using those major and minor arcs dissections of $[l-1,l)$, defined in the previous paragraph, we deduce that
    \begin{equation}\label{goal}
        \dint_{l-1}^l S(\beta)d\beta \ll S_1+S_2+S_3,
    \end{equation}
    where \begin{equation*}
        \begin{aligned}
            S_1&=\dint_{\mathfrak{M}^l(L^{\delta})}|S(\beta)|d\beta\\
            S_2&=\dsum_{i=1}^I\dint_{\mathfrak{M}^l(2^iL^{\delta})\setminus \mathfrak{M}^l(2^{i-1}L^{\delta})}|S(\beta)|d\beta\\
            S_3&=\dint_{\mathfrak{m}^l((1/4)L^{k/2})}|S(\beta)| d\beta.
        \end{aligned}
    \end{equation*}

 We shall show that whenever $N>(k-1)2^k$, each of $S_1,S_2$ and $S_3$ is $O(L^{N-k}),$ which delivers the desired bound $(\ref{2.32.3}).$  We first obtain the upper bound for $S_3.$ On noting that 
 \begin{equation*}
 \begin{aligned}
     S(\beta)&=\dsum_{-A\leq Q\y+\c\leq A}e(\beta (P(\y)+g(\y)))\\
     &=\dsum_{-A\leq Q\y+\c\leq A}e((\beta-(l-1))P(\y)+\beta g(\y)),
 \end{aligned}
 \end{equation*}
and recalling that $P(\t)$ is a non-singular form, we infer by [$\ref{ref100.1}$, Lemma 3.6] with $R=1$, $\alpha_1=\beta-(l-1)$, $f_1(\x)=P(\y)$, $G(\x)=\beta g(\y),\text{dim} V_{f_1}^*=0,\kappa=N$ that 
\begin{equation*}
    \sup_{\beta\in \mathfrak{m}^l((1/4)L^{k/2})} |S(\beta)|\ll L^{N-kN/((k-1)2^{k})+\epsilon}.
\end{equation*}
Hence, whenever $N>(k-1)2^k$, one has 
\begin{equation}
    \begin{aligned}
       S_3&\ll \sup_{\beta\in \mathfrak{m}^l((1/4)L^{k/2})}|S(\beta)|\cdot \dint_{l-1}^l 1d\beta\ll L^{N-k}.
    \end{aligned}
\end{equation}

Next, we derive the upper bound for $S_2.$ Note that $\text{mes}(\mathfrak{M}^l(H))\ll H^2L^{-k}$ and note again by [$\ref{ref100.1}$, Lemma 3.6] that whenever $N> (k-1)2^k$ one has
\begin{equation*}
\begin{aligned}
    \sup_{\beta\in \mathfrak{m}^l(2^{i-1}L^{\delta})}|S(\beta)| &\ll L^{N}(2^{i-1}L^{\delta})^{-N/((k-1)2^{k-1})+\epsilon}\\
    &\ll L^N(2^{i-1}L^{\delta})^{-2+\epsilon}\cdot (2^{i-1}L^{\delta})^{-1/((k-1)2^{k-1})}.
\end{aligned}
\end{equation*}
Hence, one has
\begin{equation*}
    \begin{aligned}
        \dint_{\mathfrak{M}^l(2^iL^{\delta})\setminus \mathfrak{M}^l(2^{i-1}L^{\delta})}|S(\beta)|d\beta&\ll \text{mes}(\mathfrak{M}^l(2^iL^{\delta}))\cdot \sup_{\beta\in \mathfrak{m}^l(2^{i-1}L^{\delta})}|S(\beta)|\\
        &\ll L^{N-k}\cdot (2^iL^{\delta})^{\epsilon-1/((k-1)2^{k-1})}\\
        & \ll L^{N-k-\eta},
    \end{aligned}
\end{equation*}
for all $i=1,2,\ldots, I$ and with some $\eta>0.$ Therefore, we find by $I=O(\log L)$ that $S_2=O(L^{N-k}).$
 


   Lastly, we deduce the upper bound for $S_1.$ For this, it is convenient to define differencing operators $\Delta_1$ by
\begin{equation*}
    \Delta_1(P(\x);\h)=P(\x+\h)-P(\x),
\end{equation*}
and so we define $\Delta_j$ for $j\geq 2$ recursively by means of the relations
\begin{equation}\label{3.3}
    \Delta_j(P(\x);\h_1,\ldots,\h_{j})=\Delta_1(\Delta_{j-1}(P(\x);\h_1,\ldots,\h_{j-1});\h_j).
\end{equation}

For given variables $\y^{(1)},\ldots,\y^{(k)}\in \Z^N$, we define 
\begin{equation*}
    \begin{aligned}
       & \phi_1(\y^{(k-1)},\y^{(k)})\\
        &:=\phi_1(\y^{(k-1)},\y^{(k)};\y^{(1)},\ldots,\y^{(k-2)})\\
        &=\Delta_{k-2}(P(\y^{(k-1)});\y^{(1)},\ldots,\y^{(k-2)})-\Delta_{k-2}(P(\y^{(k)});\y^{(1)},\ldots,\y^{(k-2)}).
    \end{aligned}
\end{equation*}
Then, by applying the Cauchy-Schwarz inequality together with the classical Weyl differencing argument, we deduce that 
\begin{equation}\label{2.62.6}
    S(\beta)^{2^{k-1}}\ll L^{(2^{k-1}-k)N}\dsum_{\y^{(1)},\ldots,\y^{(k-2)}}\dsum_{\y^{(k-1)},\y^{(k)}\in \mathcal{D}L}e(\beta \phi_1(\y^{(k-1)},\y^{(k)})),
\end{equation}
where the variables $\y^{(1)},\ldots,\y^{(k-2)}$ run over $[-2L,2L]^N$ and $$\mathcal{D}:=\mathcal{D}(\y^{(1)},\ldots,\y^{(k-2)})$$ is a box in $[-2,2]^N$ suitably defined through the classical Weyl differencing argument in terms of $\y^{(1)},\ldots,\y^{(k-2)}.$

We denote by $\mathcal{Y}(\beta;\mathcal{D})$ the exponential sum over  $\y^{(1)},\ldots,\y^{(k)}$ on the right hand side of $(\ref{2.62.6})$. Obviously, we have
\begin{equation}\label{2727}
    \mathcal{Y}(\beta;\mathcal{D})\ll \sup_{\mathcal{B}\subseteq [-1,1]^N}|\mathcal{Y}(\beta;\mathcal{B})|,
\end{equation}
where $\mathcal{B}$ is over boxes in $[-2,2]^N$. For $\beta\in \mathfrak{M}^l(L^{\delta},q,a)$ and for a given box $\mathcal{B}\subseteq[-2,2]^N$, we derive the upper bound for $\mathcal{Y}(\beta;\mathcal{B})$. To do this, we make use of the classical argument of Birch [$\ref{ref8}$, Section 5].  Hence, we shall be brief in some steps.


Put $\alpha=\beta-(l-1)-a/q.$ Write $\y^{(i)}=q\x^{(i)}+\z^{(i)}\ (i=1,\ldots,k),$ where $1\leq \z^{(i)}\leq q$ and $\x^{(i)}$ runs over boxes so that $\|\y\|_{\infty}\leq 2L$ with $i=1,\ldots,k-2,$ and $\x^{(k-1)},\x^{(k)}$ run over boxes so that $\y^{(k-1)},\y^{(k)}\in \mathcal{B}L$. Then, we have
\begin{equation}\label{2.72.7}
    \mathcal{Y}(\beta; \mathcal{B})= \dsum_{1\leq \z^{(1)},\ldots,\z^{(k)}\leq q}e\bigl(\frac{a}{q}\phi_1(\z^{(k-1)},\z^{(k)}; \z^{(1)},\ldots,\z^{(k-2)})\bigr)\mathcal{T}(\alpha, \z^{(1)},\ldots,\z^{(k)}),
\end{equation}
where 
$$\mathcal{T}(\alpha, \z^{(1)},\ldots,\z^{(k)})=\dsum_{\x^{(1)},\ldots,\x^{(k)}}e(\alpha \phi_2(\x^{(k-1)},\x^{(k)})),$$
in which 
\begin{equation*}
\begin{aligned}
    &\phi_2(\x^{(k-1)},\x^{(k)})\\
    &:= \phi_2(\x^{(k-1)},\x^{(k)};\x^{(1)},\ldots,\x^{(k-2)})\\
    &=\phi_1(q\x^{(k-1)}+\z^{(k-1)},q\x^{(k)}+\z^{(k)};q\x^{(1)}+\z^{(1)},\ldots,q\x^{(k-2)}+\z^{(k-2)}).
\end{aligned}
\end{equation*}

Note that $\sup_{\boldsymbol{\gamma}\in [0,1]^N}|\boldsymbol{\gamma}\cdot \nabla e(\alpha\phi_2(\x^{(k-1)},\x^{(k)}))|\ll qL^{k-1}|\alpha|$, since $\phi_2(\x^{(k-1)},\x^{(k)}))$ is a polynomial of degree $k$ in $\x^{(1)},\ldots,\x^{(k)}$. Hence, if we define 
\begin{equation*}
\begin{aligned}
    I(\alpha;\mathcal{B})=\dint_{[-2,2]^{(k-2)N}}\dint_{\mathcal{B}^2}e(\alpha\phi_1(\boldsymbol{\eta}^{(k-1)},\boldsymbol{\eta}^{(k)};\boldsymbol{\eta}^{(1)}\ldots,\boldsymbol{\eta}^{(k-2)}))d\boldsymbol{\eta},
\end{aligned}
\end{equation*}
where $d\boldsymbol{\eta}=d\boldsymbol{\eta}^{(k-1)}d\boldsymbol{\eta}^{(k)}d\boldsymbol{\eta}^{(1)}\cdots d \boldsymbol{\eta}^{(k-2)},$ it follows by the classical argument in [$\ref{ref8}$, Section 5] using multi-dimensional mean value theorem that 
\begin{equation}\label{2.82.8}
    \mathcal{T}(\alpha, \z^{(1)},\ldots,\z^{(k)})-(L/q)^{kN}I(L^k\alpha)\ll E_1+E_2,
\end{equation}
where
\begin{equation*}
\begin{aligned}
     &E_1=q(L/q)^{kN}L^{k-1}|\alpha|\\
      &E_2=(L/q)^{kN-1}.
\end{aligned}
\end{equation*}

Write 
\begin{equation}\label{SS}
    S(q,a)=\dsum_{1\leq \z^{(1)},\ldots,\z^{(k)}\leq q}e\bigl(\frac{a}{q}\phi_1(\z^{(k-1)},\z^{(k)}; \z^{(1)},\ldots,\z^{(k-2)})\bigr).
\end{equation} Then, on substituting $(\ref{2.82.8})$ into $(\ref{2.72.7})$, we obtain that
\begin{equation}\label{210210}
    \mathcal{Y}(\beta;\mathcal{B})=q^{-kN}S(q,a)I(L^k\alpha;\mathcal{B})L^{kN}+O(E(\beta)),
\end{equation}
 where $E(\beta)=q^{kN}(E_1+E_2).$ By the definition $E_1$ and $E_2$ together with the fact that $\beta\in \mathfrak{M}^l(L^{\delta},q,a)$, we deduce that
 \begin{equation}\label{2.11}
     E(\beta)\ll L^{kN-1+\delta}.
 \end{equation}

Meanwhile, it follows from $(\ref{2.62.6})$ together with $(\ref{2727})$ that
\begin{equation}\label{2.122.12}
    S_1\ll L^{(1-2^{1-k}k)N}\dint_{\mathfrak{M}^l(L^{\delta})}\sup_{\mathcal{B}\subseteq [-1,1]^N}|\mathcal{Y}(\beta;\mathcal{B})|^{2^{1-k}}d\beta.
\end{equation}
Let us write 
 \begin{equation}\label{sigma}
     \mathfrak{S}(L^{\delta})=\dsum_{1\leq q\leq L^{\delta}}\dsum_{\substack{1\leq a\leq q\\(q,a)=1}}(q^{-kN}S(q,a))^{2^{1-k}}.
 \end{equation}
 Then, on noting that $\text{mes}(\mathfrak{M}^l(L^{\delta}))\ll L^{2\delta-k}$ and substituting $(\ref{2.11})$ into $(\ref{210210})$ and that into $(\ref{2.122.12})$, we find that
 \begin{equation}\label{S_1}
 \begin{aligned}
       S_1\ll L^{N-k} \cdot\mathfrak{S}(L^{\delta})\cdot\dint_{|\alpha_1|\leq \frac{L^{\delta}}{q}}\sup_{\mathcal{B}\subseteq [-1,1]^N}|I(\alpha_1;\mathcal{B})|^{2^{1-k}}d\alpha_1
       +L^{N-k+2\delta-(1-\delta)2^{1-k}},
 \end{aligned}
 \end{equation}
where we have assumed that $\sup_{\mathcal{B}\subseteq [-1,1]^N}|\mathcal{Y}(\beta;\mathcal{B})|^{2^{1-k}}$ and $\sup_{\mathcal{B}\subseteq [-1,1]^N}|I(L^k\alpha;\mathcal{B})|^{2^{1-k}}$ is a measurable function and we have used a change of variable $\alpha_1=L^k\alpha.$ We will prove the measurability for these functions at the end of the proof of this lemma.

As the endgame, we shall use [$\ref{ref100.1}$, Lemma 3.4]  to obtain the upper bound the functions $\mathfrak{S}(L^{\delta})$ and $I(\alpha_1:\mathcal{B})$. Recall the definition of $\mathcal{Y}(\beta;\mathcal{B})$. By applying the Weyl's differencing argument, we see that
\begin{equation}\label{2.13}
\begin{aligned}
\mathcal{Y}(\beta;\mathcal{B})\leq \dsum_{\y^{(1)},\ldots,\y^{(k-1)}}\bigl|\dsum_{\y^{(k)}\in \mathcal{B}_1L}e(\beta \Delta_{k-1}(P(\y^{(k)});\y^{(1)},\ldots,\y^{(k-1)}))\bigr|,
\end{aligned}
\end{equation}
where $ \mathcal{B}_1\subseteq[-2,2]^N$ is a box suitably defined through the classical Weyl differencing argument in terms of $\y^{(k-1)}.$ By [$\ref{ref100.1}$, Lemma 3.4] with $P=L,R=1, \alpha_1=\beta-(l-1), f_1=P(\y), \text{dim} V^*_{f_1}=0$ and $\kappa=N$, we infer that whenever $\beta\notin \mathfrak{M}^l(H)$ with $H\leq L^{k-1}$, one has
\begin{equation}\label{2.142.14}
\begin{aligned}
    \mathcal{Y}(\beta;\mathcal{B})= \mathcal{Y}(\beta-(l-1);\mathcal{B})\ll L^{kN+\epsilon}H^{-N/(k-1)}.
    \end{aligned}
\end{equation}

Meanwhile, we note that  if we temporarily assume that $|\beta-(l-1)|<(1/2)L^{-k/2},$ it follows that $\mathfrak{M}^l(|\beta-(l-1)| L^{k},q,a)$ are disjoint over $0\leq a\leq q\leq |\beta-(l-1)| L^{k}$ with $(q,a)=1$, and $\beta$ is on the edge of  $\mathfrak{M}^l(|\beta-(l-1)| L^{k},1,0)$. Thus, one sees that $\beta\notin \mathfrak{M}^l(H)$ with $H=|\beta-(l-1)|L^{k-\epsilon}$ for any $\epsilon>0$.
Hence, it follows by $(\ref{2.142.14})$ that whenever $|\beta-(l-1)|<(1/2)L^{-k/2},$ one has
\begin{equation}\label{1515}
    \mathcal{Y}(\beta;\mathcal{B})\ll L^{kN+\epsilon}(|\beta-(l-1)|L^k)^{-N/(k-1)}.
\end{equation}

First, we claim that for any $\alpha_1>0$ one has
\begin{equation}\label{216216}
    I(\alpha_1;\mathcal{B})\ll \min\{1, |\alpha_1|^{-N/(k-1)+\epsilon}\}.
\end{equation}
The argument for this is based on [$\ref{ref8}$, Lemma 5.2]. The estimate $I(\alpha_1;\mathcal{B})\ll 1$ is trivial. To obtain the second bound, we may assume that $|\alpha_1|>1.$ Taking $a=0$ and $q=1$ in $(\ref{210210}),$ we deduce from the definition of $E_1$ and $E_2$ that 
\begin{equation}\label{217217}
    \mathcal{Y}(\beta;\mathcal{B})=I(L^k\alpha;\mathcal{B})L^{kN}+O((|\alpha|L^{k}+1)L^{kN-1}),
\end{equation}
with $\alpha=\beta-(l-1).$
 Then, on writing $L^k\alpha=\alpha_1$, it follows  by $(\ref{217217})$ together with $(\ref{1515})$ that whenever $1<|\alpha_1|<(1/2)L^{k/2}$, one has
\begin{equation}\label{218218}
    I(\alpha_1;\mathcal{B})\ll |\alpha_1|^{-N/(k-1)}L^{\epsilon}+ |\alpha_1|L^{-1}.
\end{equation}
On observing that $I(\alpha_1;\mathcal{B})$ does not depend on $L,$ by taking $L=|\alpha_1|^{1+N/(k-1)}$, the inequality $(\ref{218218})$ delivers $(\ref{216216}).$

Recall the definition ($\ref{SS}$) of $S(q,a).$ Next, we claim that whenever $(q,a)=1$, one has
\begin{equation}\label{SSS}
    S(q,a)\ll q^{kN-N/(k-1)+\epsilon}.
\end{equation}
By applying the Weyl's differencing argument, we see that 
\begin{equation}
    S(q,a)\leq \dsum_{1\leq \z^{(1)},\ldots,\z^{(k-1)}\leq q}\bigl|\dsum_{1\leq \z^{(k)}\leq q}e\bigl(\frac{a}{q}\Delta_{k-1}(P(\z^{(k)});\z^{(1)},\ldots,\z^{(k-1)}\bigr)\bigr|.
\end{equation}
 Meanwhile, note that for any $\epsilon>0,$ one cannot find $q'\in \N,a'\in \Z$ with $(q',a')=1$ and $1\leq q'\leq q^{1-\epsilon}$ satisfying 
  \begin{equation}
      \begin{aligned}
          |q'a-qa'|\leq q^{1-\epsilon}q^{1-k}
      \end{aligned}
  \end{equation}
Hence,  by applying again [$\ref{ref100.1}$, Lemma 3.4] with $P=q,\ R=1,\ \alpha_1=a/q ,\ f_1=P(\y),\ \text{dim} V^*_{f_1}=0,\ X=q^{(1-\epsilon)/(k-1)}$ and $\kappa=N$, we infer that whenever $(q,a)=1$, one sees that the inequality $(\ref{SSS})$ holds.

On substituting ($\ref{SSS}$) into $(\ref{sigma})$, we see that whenever $N>(k-1)2^{k}$
\begin{equation}
    \mathfrak{S}(L^{\delta})\ll 1.
\end{equation}
Hence, on substituting ($\ref{216216}$) into $(\ref{S_1})$, one infers that $S_1=O(L^{N-k}),$ since $0<\delta<2^{-1-k}.$  We have shown thus far that $S_1+S_2+S_3=O(L^{N-k}),$ which establishes $(\ref{2.32.3})$ by $(\ref{goal})$.

We complete the proof of this lemma by confirming that the functions $g_1(\beta):=\sup_{\mathcal{B}\subseteq [-1,1]^N}|\mathcal{Y}(\beta;\mathcal{B})|^{2^{1-k}}$ and $g_2(\alpha):=\sup_{\mathcal{B}\subseteq [-1,1]^N}|I(L^k\alpha;\mathcal{B})|^{2^{1-k}}$ are measurable. It is enough to show that for any given $k>0,$ the set $\{\alpha\in \R: g_i(\alpha)>k\}$ ($i=1,2$)
is open. Let $g_2(\alpha_0)>k$. Then, there exists $\mathcal{B}\subseteq [-1,1]^N$ such that $|I(L^k\alpha;\mathcal{B})|^{2^{1-k}}>k$. On recalling the definition of $I(\cdot;\mathcal{B} )$, one sees that there exists $\epsilon>0$  such that whenever $|\alpha-\alpha_0|<\epsilon$ we have $|I(L^k\alpha;\mathcal{B})|^{2^{1-k}}>k$, which proves that the set 
$\{\alpha\in \R: g_2(\alpha)> k\}$
is open as desired. By applying the same argument, we see that $g_1(\beta)$ is a measurable function.
   
\end{proof}


The following lemma provides an upper asymptotic estimate for the number of integer points in the variety cut by the polynomial $P(\mathbf{t})\in \Z[\mathbf{t}]$, that reduce modulo $p$, for some sufficiently large $p>M,$ to an $\mathbb{F}_{p}$-point of an another given variety $Y\subset \mathbb{A}^n$ defined over $\Z$.

\begin{lem}\label{lem2.42.4}
Let $\mathfrak{B}$ be a compact region in $\R^N$ having a finite measure, and let $Y$ be any closed subscheme of $\mathbb{A}^N_{\Z}$ of codimension $r\geq 1.$ Let $A$ and $M$ be positive real numbers. Suppose that  $r-1>k$. Then, there exists $A_0:=A_0(P(\mathbf{t}))\in \R_{>0}$ such that whenever $A>A_0$, 
we have
    \begin{equation}\label{ineq2.162.16}
    \begin{aligned}
     &\#\left\{\a\in A\mathfrak{B}\cap \Z^N\middle|\ \begin{aligned}
        &(i) \ \a\ (\text{mod}\ p)\in Y(\mathbb{F}_p)\ \text{for some prime}\ p>M\\
        &(ii)\ P(\a)=0
     \end{aligned}\right\}\\
     &\ll\frac{A^{N-k}}{M^{r-k-1}\log M}+A^{N-r+1},
    \end{aligned}
    \end{equation}
    where the implicit constant may depend on $\mathfrak{B}$ and $Y$.
\end{lem}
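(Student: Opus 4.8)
The plan is to split the count according to the prime $p$ witnessing condition $(i)$, separating small-to-moderate primes from large primes, and to treat the two ranges by different devices. First I would observe that the contribution of $\a$ with $\a\equiv\0\pmod p$ for every such $p$ needs to be isolated: if $\a$ reduces into $Y(\mathbb{F}_p)$ simply because $p\mid\a$ coordinatewise, then $\a$ is already constrained to a sublattice of index $p^N$, and summing $(A/p)^{N-k}$ over $p>M$ (using Lemma \ref{lem2.5} with $Q=p$, valid once $A>Cp$) gives a convergent-type bound $\ll A^{N-k}M^{-(N-k-1)}$, which is absorbed into the right-hand side since $N-k-1\ge r-k-1$. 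So I may assume $\a\not\equiv\0\pmod p$, i.e. the reduction $\bar\a$ is a genuine point of $Y$ in $\mathbb{A}^N(\mathbb{F}_p)$ with not all coordinates zero.

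The core of the argument is the range $M<p\le A^{1/k}$ (say). Here the strategy is: for each such $p$, the set of $\a\in A\mathfrak{B}$ with $\bar\a\in Y(\mathbb{F}_p)$ splits into congruence classes mod $p$ indexed by $Y(\mathbb{F}_p)$, and since $\codim Y=r$, the Lang--Weil estimate (or the crude Noether-normalization bound) gives $\#Y(\mathbb{F}_p)\ll p^{N-r}$. For each fixed residue class $\c\bmod p$ with $\c$ not all zero, Lemma \ref{lem2.5} with $Q=p$ yields
\begin{equation*}
\#\left\{\a\in A\mathfrak{B}\cap\Z^N\ \middle|\ P(\a)=0,\ \a\equiv\c\!\!\pmod p\right\}\ll (A/p)^{N-k},
\end{equation*}
provided $Cp\le A$, which holds throughout this range once $A$ is large. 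Multiplying and summing,
\begin{equation*}
\sum_{M<p\le A^{1/k}}\ p^{N-r}\cdot (A/p)^{N-k}\ =\ A^{N-k}\sum_{M<p\le A^{1/k}} p^{-(r-k)}.
\end{equation*}
Since $r-k\ge 2$ (because $r-1>k$ forces $r\ge k+2$), the tail sum over primes $p>M$ of $p^{-(r-k)}$ is $\ll M^{-(r-k-1)}/\log M$ by partial summation against the prime-counting function (the $1/\log M$ saving is exactly the point of restricting to primes rather than all integers). This produces the first term $A^{N-k}/(M^{r-k-1}\log M)$.

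For the complementary range $p>A^{1/k}$ the idea is completely different: here $p$ is so large that membership $\bar\a\in Y(\mathbb{F}_p)$ for an $\a$ of height $\le A$ (with the $Y$-defining polynomials evaluated at $\a$ being bounded by a fixed power of $A$, hence of size $<p$ once $p>A^{\text{const}}$) forces $\a$ to actually lie on $Y(\Z)$ — i.e. all the defining equations of $Y$ vanish at $\a$ exactly, not just mod $p$. This is the standard "large prime forces exact vanishing" trick; I would either take $p$ beyond a suitable fixed power of $A$ so that each $|f_j(\a)|<p$, or intersect over two such primes. Then $\a$ is an integer point of height $\le A$ on the codimension-$r$ scheme $Y$, and by the trivial dimension bound $\#(Y(\Z)\cap[-A,A]^N)\ll A^{N-r}\ll A^{N-r+1}$; imposing $P(\a)=0$ only helps. (One has to handle the sliver of primes $A^{1/k}<p\le A^{\text{const}}$ by the same Lang--Weil-times-Lemma-\ref{lem2.5} estimate; there the sum $A^{N-k}\sum p^{-(r-k)}$ over that dyadic range is even smaller, dominated by the $p=A^{1/k}$ endpoint contribution $A^{N-k}\cdot A^{-(r-k-1)/k}\ll A^{N-r+1}$ up to adjusting constants, so it is absorbed.) The hypothesis that $P(\t)=0$ has a nontrivial integer solution is what guarantees (via Lemma \ref{lem1.31.31.3}, or directly) that Lemma \ref{lem2.5}'s circle-method input is not vacuous and that $A_0(P)$ can be chosen uniformly; it is invoked to legitimize the main-term asymptotics underlying Lemma \ref{lem2.5}.

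The main obstacle I anticipate is bookkeeping the exact boundary $p\asymp A^{1/k}$ between the two regimes and checking that Lemma \ref{lem2.5}'s requirement $Cp\le A$ survives up to that threshold (it does, comfortably, since $A^{1/k}\le A/C$ for large $A$), together with verifying that the Lang--Weil bound $\#Y(\mathbb{F}_p)\ll p^{N-r}$ is available with an implied constant depending only on $Y$ — for a possibly reducible or non-reduced $Y$ one uses the effective form of the estimate on each irreducible component of $Y_{\mathbb{F}_p}$, the number and degrees of which are bounded uniformly in $p$ by spreading out $Y$ over $\mathrm{Spec}\,\Z$. Everything else is summation by parts and the elementary observation that $\sum_{p>M}p^{-s}\ll M^{1-s}/\log M$ for $s\ge 2$.
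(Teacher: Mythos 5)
Your handling of the range $M<p\le A/C$ is essentially the paper's argument (Lemma \ref{lem2.5} per residue class, $\#Y(\mathbb{F}_p)\ll p^{N-r}$, then summing $p^{-(r-k)}$ over primes $p>M$); the extra split at $A^{1/k}$ and the separate discussion of $\a\equiv\boldsymbol{0}\ (\mathrm{mod}\ p)$ are unnecessary but harmless. The genuine gap is in the large-prime regime. Your ``large prime forces exact vanishing'' step only applies once $p$ exceeds $\max_j|f_j(\a)|\asymp A^{D}$, where $D$ is the largest degree among defining polynomials of $Y$ (and in the paper's application $Y$ is the reducibility locus, so $D>1$). For primes in the window $A/C<p\le C'A^{D}$ you have no tool: Lemma \ref{lem2.5} requires $Cp\le A$, so the ``same Lang--Weil-times-Lemma-\ref{lem2.5} estimate'' you invoke for the sliver is simply unavailable beyond $p\asymp A$, and the crude alternative ($O(1)$ lattice points per residue class times $\#Y(\mathbb{F}_p)\ll p^{N-r}$, summed over $p\le C'A^{D}$) is far larger than $A^{N-r+1}$. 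Note also that your claim that the sliver is ``dominated by the $p=A^{1/k}$ endpoint contribution $\ll A^{N-r+1}$'' has the inequality backwards: $A^{N-k-(r-k-1)/k}\gg A^{N-r+1}$ exactly because $r-k-1>0$ and $k\ge 2$; that contribution is harmless only because it is already contained in the tail bound $A^{N-k}M^{-(r-k-1)}/\log M$. Finally, the implicit constant must be independent of $M$ (the paper sends $A\to\infty$ first and then $M\to\infty$), so bounding the untreated window by the total count $\ll A^{N-k}$ of zeros of $P$ is not an admissible patch; and the divisor-type observation (each $\a\notin Y(\Z)$ admits $O(1)$ primes $p>A$ with $\bar{\a}\in Y(\mathbb{F}_p)$) only yields $\ll A^{N-k}$ again.

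What the paper does for all $p>A$ is different in kind: after removing $\a\in Y(\Z)$ via [\ref{ref13}, Lemma 3.1] (which gives $\ll A^{N-r}$, the role your dimension bound plays), it counts pairs $(\a,p)$ with $p>A$ and quotes equation (17) from the proof of [\ref{ref13}, Theorem 3.3], i.e.\ Bhargava's Ekedahl-type elimination argument, which bounds that pair count by $\ll A^{N-r+1}$ uniformly. To close your argument you would need to supply this (or an equivalent) estimate for $A/C<p\le C'A^{D}$; exact vanishing plus the dimension bound does not reach it. A minor further point: the hypothesis that $P(\mathbf{t})=0$ has a nontrivial integer zero is not what legitimizes Lemma \ref{lem2.5}, which is an unconditional upper bound; in the paper that hypothesis is used elsewhere (for the lower bound on the number of zeros of $P$, via Lemma \ref{lem1.31.31.3}).
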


In [$\ref{ref13}$, Theorem 3.3], Bhargava provided an upper asymptotic estimate that 
    \begin{equation}\label{ineq2.16}
    \begin{aligned}
     &\#\left\{\a\in A\mathfrak{B}\cap \Z^N\middle|
       \ \a\ (\text{mod}\ p)\in Y(\mathbb{F}_p)\ \text{for some prime}\ p>M\right\}\\
    & \ll\frac{A^{N}}{M^{r-1}\log M}+A^{N-r+1}.
    \end{aligned}
    \end{equation}
 Furthermore, as alluded in [$\ref{ref13}$, Remark 3.4], the bound in $(\ref{ineq2.16})$ can be achieved for suitable choices of $Y$. Thus, this bound is essentially optimal.

In the proof of Lemma $\ref{lem2.42.4}$, we mainly adopt the argument in [$\ref{ref13}$, Theorem 3.3] though, we freely admit that the bound in $(\ref{ineq2.162.16})$ seems not optimal order of magnitude of the bound. Especially, one finds that the second term is trivially obtained by that in $(\ref{ineq2.16})$. We are independently interested in a sharper bound in ($\ref{ineq2.162.16}$) and expect that one might be able to improve this bound. Nevertheless, since the strength of the upper asymptotic estimate in Lemma $\ref{lem2.42.4}$ is enough for our purpose, we do not put our effort into optimizing this upper bound in this paper.

\begin{proof}[Proof of Lemma \ref{lem2.42.4}]
We can and do assume that $Y$ is irreducible. Otherwise, we can take its irreducible components and add up the equation \eqref{ineq2.162.16} to deduce general cases. Since $Y$ has codimension $r$, there exists $f_1,\dots,f_r \in \Z[t_1,\dots,t_N]$ such that the vanishing locus $V(f_1,\dots,f_r)$ contains an irreducible component of codimension $r$ containing $Y$. Indeed, we can assume that $Y$ equals to the irreducible component, as they have the same underlying reduced subscheme, and we only consider $\Z$ or $\mathbb{F}_p$-points of them.  By [$\ref{ref13}$, Lemma 3.1], the number of $\a \in A\mathfrak{B}\cap Y(\Z)$ is $\ll A^{N-r}$. (Note that $A\mathfrak{B}\cap Y(\Z)$ equals to $A\mathfrak{B}\cap \Z^N\cap Y(\R)$) Thus, it suffices to find an upper bound of the size of the set by
\begin{equation*}
\begin{aligned}
   & \#\left\{\a\in A\mathfrak{B}\cap \Z^N\middle|\ \begin{aligned}
        &(i) \ P(\a)=0\\
        &(ii)\ \a\ (\text{mod}\ p)\in Y(\mathbb{F}_p)\ \text{for some prime}\ p>M \\
        &(iii) \ \a \notin Y(\Z)
     \end{aligned}\right\} \\
    & \ll \frac{A^{N-k}}{M^{r-k-1}\log M }+ A^{N-r+1}.
     \end{aligned}
\end{equation*}
We may assume that $r>k+1$, since $r-1>k$ from the hypothesis in the statement of Lemma $\ref{lem2.42.4}$. 
We shall find an upper bound of a slightly larger set by
\begin{equation}\label{ineq2.18}
\begin{aligned}
     & \#\left\{(\a,p) \middle|  \begin{aligned}
        &(i) \  \a \in A\mathfrak{B}\cap \Z^N \  \text{and} \ P(\a)=0\\
        &(ii)\ p>M \  \text{a prime and} \  \a\ (\text{mod}\ p)\in Y(\mathbb{F}_p) \\
        &(iii) \ \a \notin Y(\Z)
     \end{aligned}\right\} \\
    & \ll \frac{A^{N-k}}{M^{r-k-1}\log M }+ A^{N-r+1}.
\end{aligned}
\end{equation}

First, we count pairs ($\a,p$) on the left hand side of $(\ref{ineq2.18})$ for each prime $p$ satisfying $p\leq A$; such primes arise only when $A>M.$ Meanwhile, by Lemma $\ref{lem2.5}$ we find that for a given $\c\in [1,p]^N$, the number of integer solutions $\a\in [-A,A]^N$ of $P(\a)=0$ with the congruence condition $\a\equiv \c\ (\text{mod}\ p)$ is $O((A/p)^{N-k}).$ Then, since $\#Y(\mathbb{F}_p)=O(p^{N-r}),$ we see that the number of $\a\in [-A,A]^N\cap \Z^N$ such that $\a\ (\text{mod}\ p)$ is in $Y(\mathbb{F}_p)$ is $O(p^{N-r})\cdot O((A/p)^{N-k})=O(A^{N-k}/p^{r-k}).$ Thus the total number of desired pairs $(\a,p)$ with $p\leq A$ is at most
\begin{equation}\label{ineq2.19}
\begin{aligned}
     \#&\left\{(\a,p) \middle|  \begin{aligned}
        &(i) \  \a \in A\mathfrak{B}\cap \Z^N \  \text{and} \ P(\a)=0\\
        &(ii)\ A\geq p>M \  \text{a prime and} \  \a\ (\text{mod}\ p)\in Y(\mathbb{F}_p) \\
        &(iii) \ \a \notin Y(\Z)
     \end{aligned}\right\} \\
    & \ll \dsum_{M<p\leq A}O\left(\frac{A^{N-k}}{p^{r-k}}\right)=O\left(\frac{A^{N-k}}{M^{r-k-1}\log M}\right).
\end{aligned}
\end{equation}

Next, we count pairs $(\a,p)$ with $p>A.$ It follows from (see the equation $(17)$ in the proof of  $[\ref{ref13}, \text{Theorem}\ 3.3]$) that 
\begin{equation}\label{ineq2.20}
\begin{aligned}
     &\#\left\{(\a,p) \middle|  \begin{aligned}
        &(i) \  \a \in A\mathfrak{B}\cap \Z^N \  \text{and} \ P(\a)=0\\
        &(ii)\ p>A \  \text{a prime and} \  \a\ (\text{mod}\ p)\in Y(\mathbb{F}_p) \\
        &(iii) \ \a \notin Y(\Z)
     \end{aligned}\right\} \\
  \ll &\   \#\left\{(\a,p) \middle|  \begin{aligned}
        &(i) \  \a \in A\mathfrak{B}\cap \Z^N\\
        &(ii)\ p>A \  \text{a prime and} \  \a\ (\text{mod}\ p)\in Y(\mathbb{F}_p) \\
        &(iii) \ \a \notin Y(\Z)
     \end{aligned}\right\}\ll  A^{N-r+1}.
\end{aligned}
\end{equation}
Therefore, we find by $(\ref{ineq2.19})$ and $(\ref{ineq2.20})$ that the inequality ($\ref{ineq2.18}$) holds. Hence, we complete the proof of Lemma $\ref{lem2.42.4}.$\qedhere

\end{proof}



Next, we will prove that most of $f_{\a}(\x)$ are irreducible. Let $Y$ be a subset of $\A_\Z^N$ defined to be \begin{equation}\label{Eqn.defnofY} Y:=\left\{\a\in \A_\Z^N\middle|\ f_{\a}(\x) \text{ is reducible over}\ \C \right\}.\end{equation}  Our goal here is to show that $Y$ is, in fact, an algebraic variety and that the codimension of $Y$ is strictly greater than a constant depending on $n$ and $d$. Here, we fix $r:=\codim_{\bA_\Z^N} Y$. To show the claim, we record the following two useful lemmas:


\begin{lem}\label{lem.factorialcomparison}
For any integers $n\geq 3$ and $d\geq 3$, 
\[\frac{(n+d)(n+d-1)}{n-1} < {n+d \choose d}\]
\end{lem}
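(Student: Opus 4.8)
The plan is to clear the binomial coefficient and then estimate crudely. Writing
\[ \binom{n+d}{d}=\frac{(n+d)(n+d-1)\cdots(n+1)}{d!}, \]
the numerator is the product of the $d$ consecutive integers $n+1,\dots,n+d$. So after dividing the asserted inequality through by the positive quantity $(n+d)(n+d-1)$, it becomes equivalent to
\[ d! \;<\; (n-1)\,(n+1)(n+2)\cdots(n+d-2). \]
(When $d=3$ the product $(n+1)\cdots(n+d-2)$ is the single factor $n+1$; in general it has $d-2$ factors, so the right-hand side is a product of $d-1$ integers, each at least $2$ since $n\ge 3$.)

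Next I would bound the right-hand side from below using only $n\ge 3$: we have $n-1\ge 2$ and $n+j\ge j+3$ for $1\le j\le d-2$, hence
\[ (n-1)(n+1)(n+2)\cdots(n+d-2)\;\ge\; 2\cdot 4\cdot 5\cdots (d+1)\;=\;\frac{(d+1)!}{3}. \]
Finally, $\frac{(d+1)!}{3}=\frac{(d+1)\,d!}{3}>d!$ precisely when $d+1>3$, i.e.\ when $d\ge 3$, which holds by hypothesis. Chaining the two displays yields $d!<(n-1)(n+1)\cdots(n+d-2)$, which is the desired inequality.

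There is essentially no hard step here; the only point requiring a little care is the bookkeeping in the cancellation (counting the factors correctly and noting that the product on the right is nonempty because $d\ge 3$). One could alternatively run a short induction on $d$ with base case $d=3$ — where the reduced inequality reads $6<(n-1)(n+1)=n^2-1$, valid for $n\ge 3$ — but the direct estimate above is cleaner and needs no case analysis in $n$.
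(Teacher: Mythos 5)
Your proof is correct. After clearing $(n+d)(n+d-1)$ the inequality indeed reduces to $d! < (n-1)(n+1)(n+2)\cdots(n+d-2)$, and your bound $(n-1)(n+1)\cdots(n+d-2)\ \geq\ 2\cdot 4\cdot 5\cdots(d+1) = \tfrac{(d+1)!}{3} > d!$ (strict since $d+1>3$) closes it, with the $d=3$ degenerate case handled correctly. This is a genuinely different route from the paper: the paper fixes $d$ and inducts on $n$, verifying the base case $n=3$ by hand (where the inequality amounts to $3<d+1$ after factoring out $\tfrac{(3+d)(2+d)}{2}$) and pushing from $n$ to $n+1$ via the identity $\tfrac{(n+d+1)!}{(n+1)!d!}=\tfrac{(n+d)!}{n!d!}\cdot\tfrac{n+d+1}{n+1}$ together with the elementary inequality $\tfrac{(n-1)(n+1)}{n(n+d-1)}<1$. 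Your direct product estimate avoids induction entirely and makes the source of the inequality more transparent (each factor $n+j$ is bounded below using only $n\geq 3$), at the cost of a small amount of bookkeeping in the cancellation; the paper's induction avoids any factorial manipulation beyond one ratio identity but requires carrying the hypothesis through a slightly fiddly chain of fractions. Either argument suffices for the role the lemma plays in bounding $C_{n,d}(1,d-1)$ in Lemma \ref{lem.Cndmaximum}.
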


\begin{proof}
We do this by induction on $n$. When $n=3$, we have

\[\begin{split}
3 &< 3+d-2 \\
\frac{3(3+d)(3+d-1)}{2} &<\frac{(3+d)(3+d-1)(3+d-2)}{2}  \\
\frac{(3+d)(3+d-1)}{2} &< \frac{(3+d)(3+d-1)(3+d-2)}{3\cdot 2}={n+d \choose d}
\end{split}\]

Suppose that the lemma is true for $n$. Then,
\begin{equation}\label{eq.InductionComputation}
\begin{split}
\frac{(n+d+1)(n+d)}{n} & = \frac{(n+d+1)(n+d)(n+d-1)(n-1)}{n(n-1)(n+d-1)}\\
& <\frac{(n+d)!}{n! d!}\cdot\frac{(n+d+1)(n-1)}{n(n+d-1)}\\
& = \frac{(n+d+1)!}{(n+1)!d!}\cdot \frac{(n+1)(n-1)}{n(n+d-1)}\\
& < \frac{(n+d+1)!}{(n+1)!d!}
\end{split}
\end{equation}
The last inequality follows from $\frac{(n-1)(n+1)}{n(n+d-1)}<1$.
\end{proof}

\begin{lem}\label{lem.Cndmaximum}
Let $n\geq 3$ and $d \geq 3$ be integers. Suppose that $d_1$ and $d_2$ are natural numbers with $d=d_1 + d_2$. Let $C_{n,d}(d_1, d_2) $ be a rational number defined as \[C_{n,d}(d_1, d_2) =\frac{d!(n+d_1 - 1)!}{d_1!(n+d-1)!}+\frac{d!(n+d_2 - 1)!}{d_2!(n+d-1)!}\] Then, for given $n$ and $d,$ the quantity $C_{n,d}$ attains the maximum value when $(d_1, d_2) = (1, d-1)$ or $(d_1, d_2) = (d-1, 1)$. Furthermore, its maximum is strictly less than 1.
\end{lem}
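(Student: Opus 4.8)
The plan is to treat $C_{n,d}(d_1,d_2)$ as a sum of two symmetric terms $g(d_1) + g(d-d_1)$, where $g(j) = \frac{d!\,(n+j-1)!}{j!\,(n+d-1)!}$, and to show $g$ is "convex-like" in the integer variable $j$ over $1 \le j \le d-1$, so that the sum $g(d_1)+g(d-d_1)$ is maximized at the endpoints $(1,d-1)$ and $(d-1,1)$. First I would record that $g(j) = \binom{n+j-1}{j}\big/\binom{n+d-1}{d}$, so proving the extremal claim amounts to comparing binomial coefficients $\binom{n+j-1}{j}$. I would examine the ratio $g(j+1)/g(j) = \frac{n+j}{j+1}$, which is strictly decreasing in $j$; this says $g$ has decreasing multiplicative increments, and from here a short argument shows that for $1 \le d_1 \le d-1$ the function $d_1 \mapsto g(d_1) + g(d-d_1)$ is maximized when $d_1$ is as far from $d/2$ as possible, i.e. at $d_1 = 1$ or $d_1 = d-1$. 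Concretely, moving $d_1$ one step toward the center (say from $d_1$ to $d_1+1$ with $d_1+1 \le d-d_1-1$) changes the sum by $\big(g(d_1+1)-g(d_1)\big) - \big(g(d-d_1)-g(d-d_1-1)\big)$, and since $g$'s increments are decreasing while $d_1 < d-d_1-1$, this difference is negative; hence any interior point is beaten by its neighbor closer to the boundary.

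Next, for the "maximum is strictly less than $1$" claim, it suffices to bound $C_{n,d}(1,d-1) < 1$, i.e. $g(1) + g(d-1) < 1$, which unwinds to
\[
\binom{n}{1} + \binom{n+d-2}{d-1} < \binom{n+d-1}{d}.
\]
I would rewrite this as $n + \frac{d}{n+d-1}\binom{n+d-1}{d} < \binom{n+d-1}{d}$, equivalently $n(n+d-1) < (n-1)\binom{n+d-1}{d}$, equivalently $\frac{(n+d-1)(n+d-2)}{n-1} < \binom{n+d-1}{d}$. This is precisely Lemma \ref{lem.factorialcomparison} applied with the shift $d \mapsto d-1$ (valid since $n \ge 3$ and $d \ge 3$ give $d-1 \ge 2$; the case $d-1 = 2$ should be checked directly against Lemma \ref{lem.factorialcomparison}'s hypotheses, or handled by a trivial separate computation). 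So the strict-inequality part reduces cleanly to the preceding lemma once the endpoint value is identified.

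The main obstacle I anticipate is the combinatorial bookkeeping in the first part: making the "increments of $g$ are decreasing, therefore the endpoint dominates" argument fully rigorous for the discrete variable, including the parity cases of $d$ (odd $d$ has a genuine middle value $d_1 = (d\pm1)/2$; even $d$ has $d_1 = d/2$). I would handle this by an explicit telescoping/induction: show that $g(d_1)+g(d-d_1) \le g(d_1-1)+g(d-d_1+1)$ for all $2 \le d_1 \le \lceil d/2 \rceil$, by reducing to the single inequality $g(d_1) - g(d_1-1) \le g(d-d_1+1) - g(d-d_1)$, then iterate down to $d_1 = 1$. Everything else — the identification of $g$ with a ratio of binomials and the reduction of the $<1$ claim to Lemma \ref{lem.factorialcomparison} — is routine. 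A minor point to watch: the lemma is stated for $n \ge 3, d \ge 3$, so small-degree edge cases like $d = 3$ (where the only interior choices are $(1,2)$ and $(2,1)$, already the claimed maximizers) are essentially vacuous and can be dismissed in one line.
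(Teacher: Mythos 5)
Your overall plan coincides with the paper's (an exchange/telescoping step pushing $(d_1,d_2)$ to $(1,d-1)$, then Lemma \ref{lem.factorialcomparison} for the bound $<1$), but the justification of the exchange step is stated in the wrong direction, and as written it would prove the opposite extremal behaviour. Writing $g(j)=\binom{n+j-1}{j}/\binom{n+d-1}{d}$, the additive increments are
\[
\Delta(j):=g(j+1)-g(j)=\frac{d!\,(n-1)}{(n+d-1)!}\cdot\frac{(n+j-1)!}{(j+1)!},\qquad \frac{\Delta(j+1)}{\Delta(j)}=\frac{n+j}{j+2}>1\ \ (n\ge 3),
\]
so the increments are \emph{increasing} ($g$ is convex), not decreasing. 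Your key sentence --- ``the change equals $\bigl(g(d_1+1)-g(d_1)\bigr)-\bigl(g(d-d_1)-g(d-d_1-1)\bigr)$, and since $g$'s increments are decreasing while $d_1<d-d_1-1$ this difference is negative'' --- is backwards: with decreasing increments that difference would be nonnegative and the maximum of $g(d_1)+g(d-d_1)$ would sit at the centre rather than at $(1,d-1)$. The fact you actually verified, that the ratios $g(j+1)/g(j)=\frac{n+j}{j+1}$ decrease, is log-concavity and by itself does not determine the sign of $\Delta(d_1)-\Delta(d-d_1-1)$, since $\Delta(j)=g(j)\frac{n-1}{j+1}$ is a product of an increasing and a decreasing factor. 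The repair is short: compute $\Delta(j)$ explicitly as above and use that it increases; this is in substance the paper's inequality $C_{n,d}(d_1,d_2)\le C_{n,d}(d_1-1,d_2+1)$ for $d_1\le d_2$, proved there by comparing $\frac{(n+d_1-2)!}{d_1!}$ with $\frac{(n+d_2-1)!}{(d_2+1)!}$.

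In the ``maximum $<1$'' part there is a further slip. The target $n+\binom{n+d-2}{d-1}<\binom{n+d-1}{d}$ is equivalent to $\frac{n(n+d-1)}{n-1}<\binom{n+d-1}{d}$; your inequality $\frac{(n+d-1)(n+d-2)}{n-1}<\binom{n+d-1}{d}$ is only sufficient for this, not equivalent, and Lemma \ref{lem.factorialcomparison} with $d\mapsto d-1$ actually yields $\frac{(n+d-1)(n+d-2)}{n-1}<\binom{n+d-1}{d-1}$, whose right-hand side is $\binom{n+d-1}{d-1}=\frac{d}{n}\binom{n+d-1}{d}$, not $\binom{n+d-1}{d}$. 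This can be patched (since $\frac{n}{d}(n+d-2)\ge n$ for $n\ge2$), together with your separate check of $d=3$, but the paper avoids the shift altogether by applying Lemma \ref{lem.factorialcomparison} with the original parameters: $C_{n,d}(1,d-1)=\frac{n+d}{\binom{n+d}{d}}+\frac{d}{n+d-1}<\frac{n-1}{n+d-1}+\frac{d}{n+d-1}=1$, which is the cleaner route you should follow.
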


\begin{proof}
Without loss of generality, we assume that $d_1\leq d_2.$ We shall first show that one has
\begin{equation}\label{1.8}
    C_{n,d}(d_1,d_2)\leq  C_{n,d}(d_1-1,d_2+1).
\end{equation}

In order to verify the inequality $(\ref{1.8}),$ we observe that whenever $n\geq 3$ one has
$$(n-1)\cdot\frac{(n+d_1-2)!}{d_1!}\leq (n-1)\cdot \frac{(n+d_2-1)!}{(d_2+1)!}.$$
Equivalently, this is seen to be
$$\frac{(n+d_1-1)!}{d_1!}-\frac{(n+d_1-2)!}{(d_1-1)!}\leq \frac{(n+d_2)!}{(d_2+1)!}-\frac{(n+d_2-1)!}{d_2!},$$
and thus
\begin{equation}\label{1.9}
    \frac{(n+d_1-1)!}{d_1!}+\frac{(n+d_2-1)!}{d_2!}\leq \frac{(n+d_2)!}{(d_2+1)!}+\frac{(n+d_1-2)!}{(d_1-1)!}.
\end{equation}
Therefore, we find from $(\ref{1.9})$ that 
$$\frac{(n+d-1)!}{d!}C_{n,d}(d_1,d_2)\leq \frac{(n+d-1)!}{d!}C_{n,d}(d_1-1,d_2+1). $$ This confirms the inequality $(\ref{1.8}).$ Then, by applying $(\ref{1.8})$ iteratively, we conclude that the quantity $C_{n,d}$ attains the maximum value when $(d_1, d_2) = (1, d-1)$ or $(d_1, d_2) = (d-1, 1)$.

Next, we deduce by applying Lemma $\autoref{lem.factorialcomparison}$ that
\begin{equation}\label{eq.factorials}
\begin{split}
C_{n,d}(1,d-1)& = \frac{d!n!}{(n+d-1)!}+\frac{d!(n+d - 2)!}{(d-1)!(n+d-1)!} \\
& =  \frac{d!n!}{(n+d-1)!}+\frac{d}{n+d-1} \\
&= \frac{n+d}{{n+d \choose d}} +\frac{d}{n+d-1} \\
& <\frac{(n-1)(n+d)}{(n+d)(n+d-1)} + \frac{d}{n+d-1} \\
&= \frac{n-1}{n+d-1} + \frac{d}{n+d-1} = 1.
\end{split}
\end{equation}
Therefore, this completes the proof of Lemma $\ref{lem.Cndmaximum}$.
\end{proof}


\begin{prop}\label{reducible coefficients locus} 
Recall the definition of $Y\subseteq \bA^N_\Z$ in (\ref{Eqn.defnofY}). Then, $Y$ is an affine variety. Further, let $r$ be the codimension of $Y$. Then, one has $ r> (1-C_{n,d}(1,d-1))N-1$.

\end{prop}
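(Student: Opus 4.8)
The plan is to establish two things: first, that the locus $Y$ of coefficient vectors $\a$ for which $f_\a$ factors over $\C$ is genuinely a Zariski-closed subset of $\bA^N_\Z$, and second, to bound its codimension from below by $\lfloor(1-C_{n,d}(1,d-1))N\rfloor$ by stratifying $Y$ according to the degrees of the factors and estimating the dimension of each stratum via a parametrization/incidence argument.

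For the first part, I would argue that reducibility of a degree-$d$ form in $n$ variables is a closed condition. The clean way to see this is to use the multiplication map: for each ordered pair $(d_1,d_2)$ with $d_1+d_2 = d$, $d_1,d_2\geq 1$, let $\mu_{d_1,d_2}\colon \bA^{N_{d_1,n}}\times\bA^{N_{d_2,n}}\to\bA^{N}$ be the bilinear map sending a pair of forms $(g,h)$ to the coefficient vector of $gh$. This is a morphism of affine spaces (its components are bilinear, hence polynomial, in the coefficients of $g$ and $h$), so its image is constructible, and in fact $Y = \bigcup_{d_1+d_2=d}\overline{\operatorname{im}\mu_{d_1,d_2}}$ (one needs that the image, projectivized, is already closed — which holds because the source can be taken projective after scaling out the $\mathbb G_m\times\mathbb G_m$ redundancy, and the image of a projective variety is closed; alternatively invoke that $Y$ is defined by the vanishing of the relevant resultants/discriminant-type equations). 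In any case $Y$ is a finite union of closed irreducible images, hence an affine variety as claimed.

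For the codimension bound, fix a factorization type $(d_1,d_2)$ with $d_1+d_2=d$, $d_1\leq d_2$, and let $Y_{d_1,d_2} = \overline{\operatorname{im}\mu_{d_1,d_2}}$. Then
\[
\dim Y_{d_1,d_2} \;\leq\; N_{d_1,n} + N_{d_2,n} - 1,
\]
the $-1$ accounting for the one-dimensional $\mathbb G_m$-rescaling $(g,h)\mapsto(\lambda g,\lambda^{-1}h)$ that does not change the product. (One could also just use $\leq N_{d_1,n}+N_{d_2,n}$, which is cruder but still suffices if $C_{n,d}$ is defined to absorb it; here I will keep the $-1$.) Hence
\[
\codim Y_{d_1,d_2} \;\geq\; N - N_{d_1,n} - N_{d_2,n} + 1.
\]
Now $N_{d_i,n} = \binom{n+d_i-1}{d_i}$ and $N = \binom{n+d-1}{d}$, so $N_{d_i,n}/N = \dfrac{d!\,(n+d_i-1)!}{d_i!\,(n+d-1)!}$, which is exactly the $i$-th summand in $C_{n,d}(d_1,d_2)$. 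Therefore $N_{d_1,n}+N_{d_2,n} = C_{n,d}(d_1,d_2)\,N$, giving $\codim Y_{d_1,d_2} \geq (1 - C_{n,d}(d_1,d_2))N + 1$. By Lemma \ref{lem.Cndmaximum}, $C_{n,d}(d_1,d_2)$ is maximized at $(1,d-1)$ and its maximum is $<1$, so for every admissible $(d_1,d_2)$,
\[
\codim Y_{d_1,d_2} \;\geq\; (1 - C_{n,d}(1,d-1))N + 1 \;>\; \lfloor(1-C_{n,d}(1,d-1))N\rfloor.
\]
Since reducibility over $\C$ of $f_\a$ forces a factorization into two forms of complementary positive degrees (pull two of the irreducible factors apart), $Y = \bigcup_{d_1+d_2=d, d_i\geq1} Y_{d_1,d_2}$, and $\codim Y$ equals the minimum of the $\codim Y_{d_1,d_2}$, which we have just bounded below. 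This proves the proposition; the separate low-degree cases $n=2$ or $d=2$ are handled directly (when $d=2$ there is only the type $(1,1)$ and one computes the codimension of the locus of rank-$\leq$ something quadratic forms by hand, and $n=2$ is elementary since binary forms always factor).

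\textbf{Main obstacle.} The genuinely delicate point is the first part — verifying rigorously that $Y$ is \emph{closed}, not merely constructible. The multiplication map $\mu_{d_1,d_2}$ on affine cones need not have closed image a priori; one must either projectivize correctly (quotienting the source by the $\mathbb G_m$-action and using properness of $\P^{N_{d_1,n}-1}\times\P^{N_{d_2,n}-1}\to\operatorname{Spec}\Z$) and then take the affine cone over the resulting projective closed image, or else exhibit $Y$ explicitly as a vanishing locus of resultant-type polynomials and argue these cut out exactly the reducible forms. Getting this bookkeeping exactly right — including that we may work over $\Z$ and that passing to reduced structure and to $\mathbb F_p$-points (as the later lemmas require) is harmless — is where care is needed; the dimension count itself is then routine given Lemmas \ref{lem.factorialcomparison} and \ref{lem.Cndmaximum}.
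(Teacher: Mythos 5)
Your proposal is correct and follows essentially the same route as the paper: stratify $Y$ by the factor-degree type $(d_1,d_2)$, bound $\dim Y_{(d_1,d_2)}$ by the parameter count $M(d_1,d_2)=\binom{n+d_1-1}{n-1}+\binom{n+d_2-1}{n-1}=C_{n,d}(d_1,d_2)N$ (with a strict drop coming from the scaling redundancy), and conclude via Lemma \ref{lem.Cndmaximum}. The only differences are cosmetic: the paper obtains the dimension bound from the injection $\Z[t_1,\dots,t_N]/\ker\varphi_{(d_1,d_2)}\hookrightarrow\Z[u_1,\dots,u_{M(d_1,d_2)}]$ where you argue via the multiplication morphism and its $\mathbb{G}_m$-fibers, and your explicit attention to the closedness of the image (the paper simply asserts $Y_{(d_1,d_2)}=V(\ker\varphi_{(d_1,d_2)})$) is a careful refinement of the same argument rather than a different approach.
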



\begin{proof}


Let $\langle \a , v_{d,n}(\x)\rangle$ be a reducible polynomial. Then, we write $\langle \a , v_{d,n}(\x)\rangle = f^{(1)}(\x)f^{(2)}(\x)$. Here, $f^{(1)}(\x)$ and $f^{(2)}(\x)$ are homogeneous whose degrees are strictly less than $d$. Let $d_i = \deg(f^{(i)}(\x))$ and $t={n+d_1 - 1 \choose n-1}$. Then,
\begin{equation}\label{eq.reducible} \langle \a , v_{d,n}(\x)\rangle = \underbrace{(u_1x_1^{d_1}+ \cdots u_tx_n^{d_1})}_{=f^{(1)}(\x)}\underbrace{(u_{t+1}x_1^{d_2}+\cdots + u_{M(d_1,d_2)} x_n^{d_2})}_{=f^{(2)}(\x)}\end{equation}

Let $Y_{(d_1, d_2)} \subset Y$ be a subset where $\langle \a , v_{d,n}(\x)\rangle$ seperates as (\ref{eq.reducible}). Comparing the coefficients in (\ref{eq.reducible}) for both sides, we attain $a_i$'s as a polynomial of $u_1, \dots, u_M$. Now, let us write $a_i = g_i(u_1, \dots, u_{M(d_1,d_2)})$. Consider a map $\varphi_{(d_1,d_2)}: \Z[t_1, \dots, t_N]\to \Z[u_1, \dots, u_{M(d_1,d_2)}]$ sending $t_i \mapsto g_i(u_1, \dots, u_{M(d_1,d_2)})$. Then, by the construction, $Y_{(d_1,d_2)} = V(\ker \varphi_{(d_1,d_2)})$, and so $Y=\bigcup Y_{(d_1,d_2)}$ is an affine variety. 

Now, we prove $r> (1-C_{n,d}(1,d-1))N-1$. We will instead find the upper bound of the dimension of $Y$. Let $M = \max M(d_1, d_2)$. Since $ \Z[t_1, \dots, t_N]/\ker\varphi_{(d_1,d_2)}$ injects into $\Z[u_1,\dots, u_{M(d_1,d_2)}]$, $\dim Y$ is less than or equal to $M$. Hence, it suffices to show $M\leq N- (1-C_{n,d}(1,d-1))N$. Note that we have $M(d_1,d_2)=C_{d,n}(d_1,d_2)N \leq  N -  (1-C_{n,d}(1,d-1))N$. Indeed, we have
\[\begin{split}
M(d_1, d_2)& = {n+d_1 - 1 \choose n-1} + {n+d_2 - 1 \choose n-1}\\
 &= \frac{(n+d_1 -1)!}{(n-1)!d_1!}+\frac{(n+d_2 -1)!}{(n-1)!d_2!}\\
 & = \frac{(n+d-1)!}{d!(n-1)!}\underbrace{\left(\frac{d!(n+d_1 - 1)!}{d_1!(n+d-1)!}+\frac{d!(n+d_2 - 1)!}{d_2!(n+d-1)!}\right)}_{C_{n,d}(d_1,d_2)}\\
 &= C_{n,d}(d_1,d_2)N \leq  N- (1-C_{n,d}(1,d-1))N 
\end{split}\]
The latter inequality is by Lemma $\ref{lem.Cndmaximum}$. Now, since $\dim Y\leq M \leq N- (1-C_{n,d}(1,d-1))N$, we have $ (1-C_{n,d}(1,d-1))N -1 <N-\dim Y  =r $, which is desired. \qedhere

 

\end{proof}

\begin{rmk}
Using the identity of Lemma \ref{lem.factorialcomparison} and Lemma \ref{lem.Cndmaximum}, it is not difficult to show that $(1-C_{n,d}(1,d-1))N -1$ is a positive integer. In fact, if $d\geq 4$, the constant is strictly greater than 2.


Here, in order to justify the positivity, we need to show \[1-\frac{1}{N}>C_{n,d}(1,d-1),\] or equivalently, \begin{equation}\label{eq.goaloverN} 1-C_{n,d}(1,d-1)>\frac{1}{N} \end{equation}
First of all, $N={n+d-1 \choose d}$ and by Lemma \ref{lem.factorialcomparison}, we have \begin{equation}\label{eq.1overNboundwithLemma} \frac{1}{N}< \frac{n-2}{(n+d-1)(n+d-2)}\end{equation} 
(Note: In order to make it strictly greater than 2, i.e., $(1-C_{n,d}(1,d-1))N -1>2$, it suffices to show $ 1-C_{n,d}(1,d-1)>\frac{3}{N}$. i.e., we just need to vary the factor of (\ref{eq.1overNboundwithLemma})).
Hence, it suffices to bound the left-hand side of (\ref{eq.goaloverN}) below by the right-hand side of (\ref{eq.1overNboundwithLemma}). Using the representations in (\ref{eq.factorials}), \[\begin{split} 1-C_{n,d}(1,d-1) & = \left(\frac{n-1}{n+d-1} + \frac{d}{n+d-1}\right) - \left(\frac{n+d}{{n+d \choose d}} +\frac{d}{n+d-1}\right)\\ & = \frac{n-1}{n+d-1}\left(1- \frac{\frac{(n+d)(n+d-1)}{n-1}}{{n+d \choose d}}\right) \\ & \stackrel{(\ast)}{>} \frac{n-1}{n+d-1}\left(1- \frac{n(n-2)}{(n-1)(n+d-2)}\right) \\ &=  \frac{(n-1)(d-1) + 1}{(n+d-1)(n+d-2)} > \frac{n-2}{(n+d-1)(n+d-2)} \end{split}\]
The last inequality is due to the fact that $d\geq 3$ (Here, if $d\geq 4$, the last inequality gets us $1-C_{n,d}(1,d-1)>\frac{3}{N}$). The inequality $(*)$ is true since
\[\frac{(n+d)(n+d-1)}{n-1}<\frac{(n+d)!}{n!d!}\cdot \frac{n(n-2)}{(n-1)(n+d-2)}\] by applying (\ref{eq.InductionComputation}) for $n-1$.


\end{rmk}

\section{Proof of Theorem 1.1 and Theorem 1.2}\label{sec3}
In this section, we provide the proofs of Theorem $\ref{thm1.2}$ and $\ref{thm1.21.2}$. To ease the notations, we denote by
\begin{align*}
    T_\infty &:= \pi(V(\R)) \cap [-1,1]^N
    \\
    T_p &:= \pi(V(\Q_p)) \cap \Z_p^N \ \forall p \ \text{prime}.
\end{align*}
We begin this section with a lemma on the measurability of the sets $T_{\infty}$ and $T_p$.
\begin{lem}\label{lem3.1}
    The sets $T_{\infty}\subseteq \R^N$ and $T_p\subseteq \Z_p^N$ are measurable with the Lebesgue measure and the Haar measure $\mu_p,$ respectively.
\end{lem}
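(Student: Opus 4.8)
The plan is to show that $T_\infty$ and $T_p$ are in fact \emph{closed} subsets of $[-1,1]^N$ and of $\Z_p^N$, respectively; measurability then follows at once, since closed subsets of $\R^N$ are Lebesgue measurable and closed (indeed Borel) subsets of $\Z_p^N$ are $\mu_p$-measurable. The mechanism is the same in both cases: homogeneity of $f_{\a}$ lets one test the defining condition over a \emph{compact} set of $\x$-values, and then a limit of solutions is again a solution.

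First I would reduce to a compact $\x$-set. Since $f_{\a}(\x)=\langle\a,\nu_{d,n}(\x)\rangle$ is homogeneous of degree $d$ in $\x$, one has $f_{\a}(\lambda\x)=\lambda^d f_{\a}(\x)$ for $\lambda\neq0$. Hence $\a\in T_\infty$ if and only if there is $\x$ on the Euclidean unit sphere $\Sigma_\infty:=\{\x\in\R^n:|\x|=1\}$ with $f_{\a}(\x)=0$ (rescale any nonzero real solution onto $\Sigma_\infty$; the converse is trivial). Likewise, writing a nonzero $\x\in\Z_p^n$ as $p^m\x'$ with $\x'$ primitive, i.e.\ $\max_i|x_i'|_p=1$, homogeneity gives $f_{\a}(\x')=0$; so $\a\in T_p$ if and only if there is $\x\in\Sigma_p:=\{\x\in\Z_p^n:\max_i|x_i|_p=1\}$ with $f_{\a}(\x)=0$. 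Now $\Sigma_\infty$ is compact in $\R^n$, and $\Sigma_p=\Z_p^n\setminus(p\Z_p)^n$ is the complement of a clopen set in the compact space $\Z_p^n$, hence is itself compact.

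Next I would prove closedness, say for $T_\infty$ (the case of $T_p$ is identical, with $\Sigma_\infty$ replaced by $\Sigma_p$ and the Euclidean topology by the $p$-adic one). Take $\a_m\to\a$ with $\a_m\in T_\infty$, and choose $\x_m\in\Sigma_\infty$ with $f_{\a_m}(\x_m)=0$. By compactness of $\Sigma_\infty$ we may pass to a subsequence along which $\x_m\to\x\in\Sigma_\infty$. The map $(\b,\y)\mapsto\langle\b,\nu_{d,n}(\y)\rangle$ is a polynomial in the coordinates of $(\b,\y)$, hence continuous over $\R$ --- and also over $\Q_p$, since the components of $\nu_{d,n}$ have integer coefficients --- so $f_{\a}(\x)=\lim_m f_{\a_m}(\x_m)=0$ and therefore $\a\in T_\infty$. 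Thus $T_\infty$ is closed in $[-1,1]^N$ and $T_p$ is closed in $\Z_p^N$, giving the lemma. (Equivalently, one may phrase this as: the projection along the compact factor $\Sigma_\infty$, resp.\ $\Sigma_p$, is a closed map, and $T_\infty$, $T_p$ are the projections of the closed zero loci $\{(\a,\x):f_{\a}(\x)=0\}$.)

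I do not expect a genuine obstacle here. The only points that require a little care are the reduction to primitive $\x$ in the $p$-adic case --- needed so that the relevant set of $\x$ is compact rather than merely $\Z_p^n\setminus\{\0\}$, which is not closed --- and the observation that $\Sigma_p$ is clopen in $\Z_p^n$. One could instead invoke that $T_\infty$ is semialgebraic by Tarski--Seidenberg and that $T_p$ is $p$-adically semialgebraic by Macintyre's theorem, but the compactness argument above is more elementary and self-contained.
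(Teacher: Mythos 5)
Your proof is correct, and it takes a somewhat different route from the paper's. The paper also reduces to a compact set of $\x$-values via homogeneity, but then writes $T_{\infty}$ (resp.\ $T_p$) as $\bigcap_{k}\bigcup_{\x}\{\a : |f_{\a}(\x)|<1/k\}$ with $\x$ running over the countable dense sets $[-1,1]^n\cap\Q^n$ (resp.\ $\Z^n$), so that the sets are exhibited as countable intersections of countable unions of open sets, hence Borel. You instead prove the stronger statement that $T_{\infty}$ and $T_p$ are \emph{closed}: you normalize solutions onto the Euclidean unit sphere, resp.\ onto the set of primitive vectors $\Z_p^n\setminus(p\Z_p)^n$, and use sequential compactness of these sets together with continuity of $(\b,\y)\mapsto\langle\b,\nu_{d,n}(\y)\rangle$ to see that a limit of coefficient vectors in the set again lies in the set (equivalently, the sets are projections of closed zero loci along a compact factor). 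What your approach buys is that it needs no countable dense family and yields closedness outright; moreover, by insisting that the test points lie on the sphere, resp.\ be primitive, you automatically exclude the degenerate point $\x=\0$, for which $f_{\a}(\0)=0$ identically --- a point that the paper's displayed representations allow as written and that the paper's limit-point argument does not explicitly rule out, so your normalization is in fact the cleaner way to run the compactness step. Either argument delivers measurability, which is all that is needed downstream.
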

\begin{proof}
This follows from a version of Tarski–Seidenberg–Macintyre theorem which implies that $T_{\infty}$ and $T_{p}$ are semialgebraic sets (see [$\ref{ref100.12}$, Theorem 3]). 
\end{proof}

In advance of the proofs of Theorem $\ref{thm1.2}$ and Theorem $\ref{thm1.21.2}$, we provide some definitions and observations. We consider the natural map
\begin{align*}
    \Phi^A:\ [-A,A]^N\cap\Z^N&\rightarrow [-1,1]^N\times\displaystyle\prod_{p\ \text{prime}}\Z_p^N \\
    \a &\mapsto\left(\frac{\a}{A},\a,\ldots,\a,\ldots\right).
\end{align*}
We sometimes use a different order of primes in the product $\prod_{p\ \text{prime}}\Z_p$, for notational convenience.

Furthermore, for a given subset $\mathcal{U}$ of $[-1,1]^N\times\displaystyle\prod_{p\ \text{prime}}\Z_p^N$ and a given polynomial $P(\mathbf{t})\in \Z[t_1,\dots, t_N]$, we define a quantity $\d( \mathcal{U},A;P)$ by
\begin{equation}\label{2.22.2}
   \d(\mathcal{U},A;P):=\frac{\#\left\{\a\in (\Phi^A)^{-1}(\mathcal{U})\middle|\ P(\a)=0\right\}}{\#\left\{\a\in [-A,A]^N\cap \Z^N\middle|\ P(\a)=0\right\}}. 
\end{equation}

 We say that a subset of $\Z_p$ is an open interval if it has the form $\{x\in \Z_p|\ |x-a|_p\leq b\}$ for some $a\in \Z_p$ and $b\in \R.$ Furthermore, by an open box $I_p$ in $\Z_p^N$ with a given prime $p,$ we mean a Cartesian product of open intervals. Suppose that $\mathfrak{B}$ be an open box in $[-1,1]^N\cap \R^N.$ For a given set $\mathfrak{p}$ of prime numbers, it follows that
\begin{equation*}
\begin{aligned}
    & \d\biggl(\mathfrak{B}\times\displaystyle\prod_{p\in \mathfrak{p}}I_p\times \displaystyle\prod_{p\notin \mathfrak{p}}\Z_p^N,A;P\biggr)\\
     &=\frac{\#\biggl\{\a\in (\Phi^A)^{-1}\biggl(\mathfrak{B}\times\displaystyle\prod_{p\in \mathfrak{p}}I_p\times \displaystyle\prod_{p\notin \mathfrak{p}}\Z_p^N\biggr)\biggl|\ P(\a)=0\biggr\}}{\#\left\{\a\in [-A,A]^N\cap \Z^N\middle|\ P(\a)=0\right\}}.
\end{aligned}
\end{equation*}
We observe that the set $(\Phi^A)^{-1}\biggl(\mathfrak{B}\times\displaystyle\prod_{p\in \mathfrak{p}}I_p\times \displaystyle\prod_{p\notin \mathfrak{p}}\Z_p^N\biggr)$ can be viewed by a set of integers in $A\mathfrak{B}\cap \Z^N$ satisfying certain congruence conditions associated with the radius and the center of the open intervals defining $I_p$. Thus, we infer from the Chinese remainder theorem that there exists $B\in \Z$ whose prime divisors are in $\mathfrak{p}$, and $\r\in\Z^N$ such that
\begin{equation*}
    \d\biggl(\mathfrak{B}\times\displaystyle\prod_{p\in \mathfrak{p}}I_p\times \displaystyle\prod_{p\notin \mathfrak{p}}\Z_p^N,A;P\biggr)=\frac{\#\biggl\{\a\in A\mathfrak{B}\cap \Z^N\biggl|\ P(\a)=0,\ \a\equiv\r\ \text{mod}\ B\biggr\}}{\#\left\{\a\in [-A,A]^N\cap \Z^N\middle|\ P(\a)=0\right\}}.
\end{equation*}
Then, by applying Lemma $\ref{lem1.31.31.3}$, we obtain
 \begin{equation}\label{3.7}
 \begin{aligned}
    &\d\biggl(\mathfrak{B}\times\displaystyle\prod_{p\in \mathfrak{p}}I_p\times \displaystyle\prod_{p\notin \mathfrak{p}}\Z_p^N,A;P\biggr)\\
    &=\frac{\frac{1}{\zeta(N-k)}\prod_{p\notin \mathfrak{p}}\sigma_p\cdot \prod_{p\in \mathfrak{p}}\sigma_p^{B,\r}\cdot \sigma_{\infty}(\mathfrak{B})+O(A^{-\delta})}{\frac{1}{\zeta(N-k)}\prod_{p}\sigma_p \cdot  \sigma_{\infty}([-1,1]^N)+O(A^{-\delta})}.
 \end{aligned}
\end{equation}
For a given measurable set $S_p\subseteq \Z_p^N,$ recall the definition of $\sigma_p(S_p)$ in the preamble to the statement of Theorem $\ref{thm1.2}.$ We observe that $\sigma_p^{B,\r}=\sigma_p(I_p)$ and $\sigma_p=\sigma_p(\Z_p)$. Therefore, we find from ($\ref{3.7}$) that
\begin{equation}\label{key}
    \lim_{A\rightarrow \infty}\d\biggl(\mathfrak{B}\times\displaystyle\prod_{p\in \mathfrak{p}}I_p\times \displaystyle\prod_{p\notin \mathfrak{p}}\Z_p^N,A;P\biggr)=\frac{\prod_{p\in \mathfrak{p}}\sigma_{p}(I_p)\cdot\sigma_{\infty}(\mathfrak{B})}{\prod_{p\in \mathfrak{p}}\sigma_{p}(\Z_p)\cdot\sigma_{\infty}([-1,1]^N)}
\end{equation}
 


\begin{proof}[Proof of Theorem 1.1]
    Recall that
    \begin{align*}
   T_{\infty}&=\left\{\a\in [-1,1]^N\cap \R^N\middle|\ \exists\ \x\in \R^n\setminus\{\boldsymbol{0}\}\ \text{such that}\ f_{\a}(\x)=0\right\} \\
   T_p&=\left\{\a\in \mathbb{Z}_p^N\middle|\  \exists\ \x\in \Z_p^n\setminus\{\boldsymbol{0}\}\ \text{such that}\ f_{\a}(\x)=0\right\}.
\end{align*}
On recalling the definition ($\ref{2.22.2}$) of $\boldsymbol{d}(\cdot, A;P),$
we infer that 
$$\varrho_{d,n}^{P,\text{loc}}(A)=\boldsymbol{d}\biggl(T_{\infty}\times \displaystyle\prod_{p\ \text{prime}}T_p,A;P\biggr).$$ Thus, it suffices to show that
\begin{equation}\label{goal}
   \lim_{A\rightarrow\infty} \boldsymbol{d}\biggl(T_{\infty}\times \displaystyle\prod_{p\ \text{prime}}T_p,A;P\biggr)=c_P.
\end{equation}

In order to verify that the equality $(\ref{goal})$ holds, we introduce
$$\boldsymbol{d}(A,M)=\boldsymbol{d}\biggl(T_{\infty}\times \displaystyle\prod_{p<M}T_p\times \displaystyle\prod_{p\geq M}\Z_p ,A;P\biggr).$$ 
One sees by applying the triangle inequality that
\begin{equation}\label{triangle}
\begin{aligned}
  & \lim_{A\rightarrow \infty}\biggl|\boldsymbol{d}\biggl(T_{\infty}\times \displaystyle\prod_{p\ \text{prime}}T_p,A;P\biggr)-c_P\biggr| \leq\lim_{A\rightarrow \infty}|d_1(A,M)|+ \lim_{A\rightarrow \infty}|d_2(A,M)|,
\end{aligned}
\end{equation}
where
$$d_1(A,M)=\boldsymbol{d}\biggl(T_{\infty}\times \displaystyle\prod_{p\ \text{prime}}T_p,A;P\biggr)-\boldsymbol{d}(A,M)$$
and
$$d_2(A,M)=\boldsymbol{d}(A,M)-c_P.$$

First, we analyze the quantity $$\lim_{A\rightarrow \infty}|d_1(A,M)|.$$
We readily see from the definition of $\boldsymbol{d}(A,M)$ that
$$|d_1(A,M)|=\boldsymbol{d}\biggl(T_{\infty}\times \prod_{p<M}T_p\times \biggl(\prod_{p\geq M}T_p\biggr)^c, A;P\biggr).$$
Furthermore, we find that
\begin{equation}\label{d1AM}
\begin{aligned}
   &|d_1(A,M)| \\
   &\leq \boldsymbol{d}\biggl([-1,1]^N\times \prod_{p<M}\Z_p\times \biggl(\prod_{p\geq M}T_p\biggr)^c, A;P\biggr)\\
   &\leq \frac{\left\{\a\in [-A,A]^N\cap \Z^N\middle|\ \begin{aligned}
        &(i)\ \exists\ p>M\ \text{s.t.}\ f_{\a}(\x)=0\ \text{has no solution in}\ \Z_p^n\\
        &(ii)\ P(\a)=0
    \end{aligned}\right\}}{\#\left\{\a\in [-A,A]^N\cap \Z^N\middle|\ P(\a)=0\right\}}.
\end{aligned}
\end{equation}
Meanwhile, for sufficiently large prime $p$, whenever $f_{\a}(\x)$ is irreducible over $\overline{\mathbb{F}}_p$, the Lang-Weil estimate [$\ref{ref100.2}$] (see also $[\ref{ref100.0}, \text{Theorem}\ 3]$) ensures the existence of a smooth point $\x\in ({\mathbb{F}}_p)^n$ satisfying $f_{\a}(\x)=0$. Then, by Hensel's lemma, we have a point $\x$ in $\Q_p$ satisfying $f_{\a}(\x)=0.$ Therefore, we conclude from $(\ref{d1AM})$ that for sufficiently large $M>0$, one has 
\begin{equation}\label{3.113.11}
\begin{aligned}
    &|d_1(A,M)|\\
    &\leq \frac{\left\{\a\in [-A,A]^N\cap \Z^N\middle|\ \begin{aligned}
        &(i)\ \exists\ p>M\ \text{s.t.}\ f_{\a}(\x)\ \text{is reducible over}\ \overline{\mathbb{F}}_p\\
        &(ii)\ P(\a)=0
    \end{aligned}\right\}}{\#\left\{\a\in [-A,A]^N\cap \Z^N\middle|\ P(\a)=0\right\}}.
\end{aligned}
\end{equation}
We shall apply Lemma $\ref{lem2.42.4}$ with $Y$ defined in ($\ref{Eqn.defnofY}$). With this $Y$ in mind, we find from $(\ref{3.113.11})$ that
\begin{equation}
    \begin{aligned}
       &|d_1(A,M)|\\
       &\leq   \frac{ \#\left\{\a\in [-A,A]^N\cap \Z^N\middle|\ \begin{aligned}
        &(i) \ \a\ (\text{mod}\ p)\in Y(\mathbb{F}_p)\ \text{for some prime}\ p>M \\
        &(ii)\ P(\a)=0
     \end{aligned}\right\}}{\#\left\{\a\in [-A,A]^N\cap \Z^N\middle|\ P(\a)=0\right\}}.
    \end{aligned}
\end{equation}
Note by the classical argument (see $[\ref{ref8}]$ and $[\ref{ref26}, \text{the proof of Theorem 1.3}]$) that the fact that $P(\t)=0$ has a nontrivial integer solution implies that $ \sigma_{\infty}([-1,1]^N)\cdot\prod_{p} \sigma_p(\Z_p)\asymp 1$. Then, one infers by Lemma $\ref{lem1.31.31.3}$ that $$\#\left\{\a\in [-A,A]^N\cap \Z^N\middle|\ P(\a)=0\right\}\asymp A^{N-k}.$$ Proposition $\ref{reducible coefficients locus}$ together with the hypothesis in the statement of Theorem $\ref{thm1.2}$ that $k<\lfloor (1-C_{n,d}(1,d-1))N\rfloor$ reveals that the codimension $r$ of $Y$ is strictly greater than $k+1$. Hence, we find by applying Lemma $\ref{lem2.42.4}$ that 
\begin{equation*}
    |d_1(A,M)|\ll  \frac{1}{M^{r-k-1}\log M}+A^{k-r+1}.
\end{equation*}
Therefore, we obtain
\begin{equation}\label{3.123.12}
    \lim_{A\rightarrow \infty}|d_1(A,M)|\ll \frac{1}{M^{r-k-1}\log M}.
\end{equation}

Next, we turn to estimate $\lim_{A\rightarrow \infty}|d_2(A,M)|.$ For simplicity, we temporarily write
\begin{equation}\label{3.83.8}
    \lim_{A\rightarrow \infty}|d_2(A,M)|=\varphi(M).
\end{equation}
One infers by $(\ref{key})$ together with Lemma $\ref{lem3.1}$ that 
$$d_2(A,M)=\frac{\prod_{p\leq M}\sigma_{p}(T_p)\cdot\sigma_{\infty}(T_{\infty})}{\prod_{p\in \mathfrak{p}}\sigma_{p}(\Z_p)\cdot\sigma_{\infty}([-1,1]^N)},$$
and thus by the definition of $c_P$, we discern that 
\begin{equation}\label{3.103.10}
    \varphi(M)\rightarrow 0,
\end{equation} as $M\rightarrow \infty.$

Hence, we conclude from $(\ref{triangle}),$ $(\ref{3.123.12})$ and $(\ref{3.83.8})$ that 
$$\lim_{A\rightarrow \infty}\biggl|\boldsymbol{d}\biggl(T_{\infty}\times \displaystyle\prod_{p\ \text{prime}}T_p,A;P\biggr)-c_P\biggr|\ll \frac{1}{M^{r-k-1}\log M}+\varphi(M).$$ By letting $M\rightarrow \infty$, we see from $(\ref{3.103.10})$ that 
$$\lim_{A\rightarrow \infty}\biggl|\boldsymbol{d}\biggl(T_{\infty}\times \displaystyle\prod_{p\ \text{prime}}T_p,A;P\biggr)-c_P\biggr|=0,$$
which gives $(\ref{goal})$. This completes the proof of Theorem $\ref{thm1.2}.$
\end{proof}

\begin{rmk}
  By making use of Lemma $\ref{lem2.42.4}$, it seems possible to apply [$\ref{ref1511}$, Theorem 2.5] in order to obtain the conclusion of Theorem 1.1. However, in order to make this paper self-contained and for readers who are interested in an explicit way for particular thin sets we are dealing with, we record the procedure in full.
\end{rmk}


For the proof of Theorem $\ref{thm1.21.2}$, we require a proposition that plays an important role in guaranteeing the positiveness of $c_P$.
In order to describe this proposition, it is convenient to define $S_{\infty}:=\pi(V(\R)).$
Also, for given points $\b_p\in \Z_p^N$ and $\b_{\infty}\in \R^N$, we define
\begin{equation*}
    \begin{aligned}
        B_{\infty}(\b_{\infty},\eta)&=\left\{\a\in \R^N\middle|\ |a_i-(\b_{\infty})_i|<\eta\ \text{for $1\leq i\leq N$}\right\}\\
        B_{p}(\b_p,\eta)&=\left\{\a\in \Z_p^N\middle|\ |a_i-(\b_p)_i|_{p}<\eta\ \text{for $1\leq i\leq N$}\right\},
    \end{aligned}
\end{equation*}
for every prime $p.$ 

 \begin{prop}\label{lem1.3}
Suppose that there exists $\b_p\in \Z_p^N\setminus\{\0\}$ (resp.~$\b_\infty \in \R^N\setminus \{0\}$) such that the variety $V_{\b_p}$ in $\P^{n-1}_{\Q_p}$ (resp.~$V_{\b_{\infty}}$ in $\P^{n-1}_\R$) admits a smooth $\Q_p$-point (resp.~$\R$-point) for each prime $p$. Then, for each prime $p$, there exists positive numbers $\eta_p$  and  $\eta_{\infty}$ less than or equal to $1$ such that 
\begin{align*}
     B_{p}(\b_p,\eta_p)&\subseteq T_{p} \\
     B_{\infty}(\b_\infty,\eta_{\infty})&\subseteq S_{\infty}.
\end{align*}
 \end{prop}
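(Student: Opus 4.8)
The plan is to start from the given smooth rational point, normalise it to a primitive integral point $\x_0$, and then deform it: over $\R$ via the implicit function theorem, and over $\Q_p$ via Hensel's lemma. The crucial elementary feature we exploit is that for a \emph{fixed} $\x$ with integer coordinates, the quantity $f_{\a}(\x)=\langle\a,\nu_{d,n}(\x)\rangle$ and each partial derivative $\partial_{x_i}f_{\a}(\x)$ are \emph{linear} in $\a$ with integer coefficients. Concretely: by hypothesis there is a smooth point of $V_{\b}$ in $\P^{n-1}(\Q)$; clearing denominators and dividing out the gcd of the coordinates we obtain $\x_0\in\Z^n$ primitive with $f_{\b}(\x_0)=0$ and $\nabla f_{\b}(\x_0)\neq\0$. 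Fix an index $j$ with $\partial_{x_j}f_{\b}(\x_0)\neq 0$. One checks at once that $\x_0$ cannot be a scalar multiple of $\e_j$: otherwise $f_{\b}(\x_0)=0$ would force the coefficient of $x_j^d$ in $f_{\b}$ to vanish, whence $\partial_{x_j}f_{\b}(\x_0)=0$. So $\x_0$ has a nonzero coordinate in some position $i\neq j$. By the linearity just noted, for $\a$ near $\b$ one has $|f_{\a}(\x_0)-f_{\b}(\x_0)|_{*}\le c_1|\a-\b|_{*}$ and $|\partial_{x_j}f_{\a}(\x_0)-\partial_{x_j}f_{\b}(\x_0)|_{*}\le c_2|\a-\b|_{*}$, where $|\cdot|_{*}$ is either the Euclidean norm on $\R^N$ or the $p$-adic norm on $\Z_p^N$, and in the $p$-adic case one may take $c_1,c_2\le1$ since the relevant monomial values are integers.

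\textbf{Archimedean inclusion.} Put $g(\a,t):=f_{\a}(\x_0+t\e_j)$, a polynomial with $g(\b,0)=0$ and $\partial_t g(\b,0)=\partial_{x_j}f_{\b}(\x_0)\neq0$. The implicit function theorem yields $\eta_{\infty}\in(0,1]$ and a continuous map $\a\mapsto t(\a)$ on $B_{\infty}(\b,\eta_{\infty})$ with $t(\b)=0$ and $g(\a,t(\a))=0$. Shrinking $\eta_{\infty}$ so that the $i$-th coordinate of $\x_0+t(\a)\e_j$ (which equals $x_i^{(0)}\neq0$ when $i\neq j$) stays nonzero, we obtain $\x:=\x_0+t(\a)\e_j\in\R^n\setminus\{\0\}$ with $f_{\a}(\x)=0$; hence $\a\in S_{\infty}$, giving $B_{\infty}(\b,\eta_{\infty})\subseteq S_{\infty}$.

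\textbf{$p$-adic inclusion.} Let $v:=v_p\big(\partial_{x_j}f_{\b}(\x_0)\big)$, so $|\partial_{x_j}f_{\b}(\x_0)|_p=p^{-v}$, and choose $\eta_p\in(0,1]$ with $\eta_p<p^{-v}$ and $\eta_p<p^{-2v}$. For $\a\in B_p(\b,\eta_p)\subseteq\Z_p^N$ the bounds above give $|\partial_{x_j}f_{\a}(\x_0)-\partial_{x_j}f_{\b}(\x_0)|_p\le\eta_p<p^{-v}$, so by the ultrametric inequality $|\partial_{x_j}f_{\a}(\x_0)|_p=p^{-v}$; and $|f_{\a}(\x_0)|_p=|f_{\a}(\x_0)-f_{\b}(\x_0)|_p\le\eta_p<p^{-2v}=|\partial_{x_j}f_{\a}(\x_0)|_p^2$. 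Viewing $F(T):=f_{\a}(x_1^{(0)},\dots,T,\dots,x_n^{(0)})\in\Z_p[T]$, with $F(x_j^{(0)})=f_{\a}(\x_0)$ and $F'(x_j^{(0)})=\partial_{x_j}f_{\a}(\x_0)$, Hensel's lemma produces $\xi\in\Z_p$ with $F(\xi)=0$ and $|\xi-x_j^{(0)}|_p\le|F(x_j^{(0)})|_p/|F'(x_j^{(0)})|_p<1$. The resulting $\x\in\Z_p^n$ agrees with $\x_0$ outside the $j$-th coordinate and is congruent to $x_j^{(0)}$ there modulo $p$; since $\x_0$ is primitive it has a $p$-adic unit coordinate, which survives in $\x$, so $\x\neq\0$. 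Hence $\a\in T_p$, giving $B_p(\b,\eta_p)\subseteq T_p$.

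\textbf{Main difficulty.} There is no deep obstacle here; the only point requiring care is the choice of $\eta_p$ in the $p$-adic step, which must be small relative to $p^{-2v}$ — and $v=v_p(\partial_{x_j}f_{\b}(\x_0))$ may grow with $p$ — so that perturbing $\a$ preserves both the Hensel inequality $|f_{\a}(\x_0)|_p<|\partial_{x_j}f_{\a}(\x_0)|_p^2$ and the non-degeneracy $|\partial_{x_j}f_{\a}(\x_0)|_p=p^{-v}$. This is exactly why the statement permits $\eta_p$ to depend on $p$, whereas the archimedean radius $\eta_{\infty}$ comes from a single application of the implicit function theorem.
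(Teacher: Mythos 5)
Your proof is correct and follows essentially the same route as the paper: normalise the smooth $\Q$-point of $V_{\b}$ to an integral one, perturb the coefficient vector $\a$ near $\b$, and solve for a single coordinate — via Hensel's lemma with radius $\eta_p\approx p^{-2v}$ in the $p$-adic case and via the implicit function theorem over $\R$. The only (harmless) differences are that the paper invokes a multivariable Hensel-type theorem for $\Psi_p(\a,\x)$ jointly while you reduce to the classical one-variable Hensel using linearity in $\a$, and you make explicit the check that the constructed zero is nontrivial, which the paper leaves implicit.
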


 \begin{proof}
     By the existence of a smooth $\Q_p$-point in $V_{\b_p}$, there is an open neighborhood of $V$ containing the smooth $\Q_p$-point such that the restriction of $\pi$ (after base change to $\Q_p$) to the neighborhood is a smooth map into a neighborhood of $\A^N$ containing $\b_p$. By \cite[Theorem 10.5.1]{CT-S}, this map induces a topologically open map between $\Q_p$-points, and thus the image contains an open ball $B_{p}(\b_p,\eta_p)$ for some $\eta_p$. The proof for the real place is identical.
 \end{proof}

\begin{proof}[Proof of Theorem 1.2]
    Recall the natural map
    \begin{align*}
        \Phi^A:\ [-A,A]^N\cap\Z^N&\rightarrow [-1,1]^N\times\displaystyle\prod_{p\ \text{prime}}\Z_p^N \\
        \a&\mapsto \left(\frac{\a}{A},\a,\ldots,\a,\ldots\right).
    \end{align*}
Furthermore, we recall the definitions of $S_{\infty}:=\pi(V(\R))$ and $ T_p := \pi(V(\Q_p)) \cap \Z_p^N$ for all primes $p.$

Recall from the hypothesis in the statement of Theorem $\ref{thm1.21.2}$ that for each place $v$ of $\Q$, there exists a non-zero $\boldsymbol{b}_v\in \Q_v^N$ such that $P(\boldsymbol{b}_v)=0$ and the variety $V_{\boldsymbol{b}_v}$ in $\mathbb{P}^{n-1}$ admits a smooth $\mathbb{Q}_v$-point. For $v=p$ $(p\ \textrm{prime}),$ we assume that this $\boldsymbol{b}_p$ is a $\Z_p$-point. Otherwise, by multiplying powers of $p$, we can make it a $\Z_p$-point not changing the variety $V_{\b_p}$. One sees by applying Proposition  \ref{lem1.3} that for each prime $p$ there exist positive numbers $\eta_{p}$ and $\eta_{\infty}$ less than $1$ such that $B_p(\b_p,\eta_p)\subseteq T_p$ and $B_{\infty}(\b_{\infty},\eta_{\infty})\subseteq S_{\infty}.$  We choose a sufficiently large number $C:=C(\b_{\infty})>0$ such that $B_{\infty}(\b_{\infty}/C,\eta_{\infty}/C)\subseteq [-1,1]^N.$ Furthermore, on observing the relation that $f_{\b_{\infty}/C}(\x)=(1/C)\cdot f_{\b_{\infty}}(\x)$, we infer that $B_{\infty}(\b_{\infty}/C,\eta_{\infty}/C)\subseteq S_{\infty}.$ Therefore, on noting that
\begin{equation*}
    \begin{aligned}
        T_{\infty}=S_{\infty}\cap [-1,1]^N.
    \end{aligned}
\end{equation*}  one deduces that $$B_{\infty}(\b_{\infty}/C,\eta/C)\subseteq T_{\infty}.$$

Meanwhile, it follows by Theorem $\ref{thm1.2}$ that 
\begin{equation*}
    \lim_{A\rightarrow \infty}\varrho_{d,n}^{P,\text{loc}}(A)=c_P,
\end{equation*}
and thus, we find that
\begin{equation}\label{lowerbound}
     \lim_{A\rightarrow \infty}\varrho_{d,n}^{P,\text{loc}}(A)\geq \frac{\prod_{p<M} \sigma_{p}(B_p(\b_p,\eta_p))\cdot \prod_{p\geq M}\sigma_{p}(T_p)\cdot \sigma_{\infty}(B_{\infty}(\b_{\infty}/C,\eta_{\infty}/C))}{\prod_p \sigma_p(\Z_p)\cdot \sigma_{\infty}([-1,1]^N)},
\end{equation}
for any $M>0.$
Then, it suffices to show that the right-hand side in $(\ref{lowerbound})$ is greater than $0$. We shall prove this by showing that there exists $M>0$ such that
\begin{equation}\label{11}
    \frac{\prod_{p<M} \sigma_{p}(B_p(\b_p,\eta_p))\cdot\sigma_{\infty}(B_{\infty}(\b_{\infty}/C,\eta_{\infty}/C))}{\prod_{p<M} \sigma_p(\Z_p)\cdot \sigma_{\infty}([-1,1]^N)}>0
\end{equation}
and 
\begin{equation}\label{22}
    \frac{\prod_{p\geq M}\sigma_p(T_p)}{\prod_{p\geq M}\sigma_p(\Z_p)}>0.
\end{equation}

First, we shall show that the inequality $(\ref{11})$ holds. For a given $B\in\N$ and $\r\in \Z^N$, we recall the definition of $\sigma_p^{B,\r}$ in the statement of Lemma $\ref{lem1.31.31.3}$. Note that there exists $B\in\N\cup\{0\}$ such that 
\begin{equation}\label{equal}
\displaystyle\prod_{p<M}\sigma_p(B_p(\b_p,\eta_p))=\displaystyle\prod_{p<M}\sigma_p^{B,\b_p}.
\end{equation} Furthermore, the $p$-adic densities $\sigma_p(\Z_p)$, $\sigma_p^{B,\b_p}$ and the real densities $\sigma_{\infty}([-1,1]^N)$, $\sigma_{\infty}(B_{\infty}(\b_{\infty}/C,\eta_{\infty}/C))$ are greater than $0$ by the application of the Hensel's lemma and the implicit function theorem (see [$\ref{ref26}$, the proof of Theorem 1.3] or [$\ref{ref3}$, Lemma 5.7]). Therefore, one sees from ($\ref{equal}$) that the inequality $(\ref{11})$ holds for any $M>0$.

Next, we shall show that $(\ref{22})$ holds. For any $M_1$ with $M_1>M,$ we find from $(\ref{key})$ with $[M,M_1)$ and $[-1,1]^N$ in place of $\mathfrak{p}$ and $\mathfrak{B}$ that 
\begin{equation}\label{final1}
\begin{aligned}
 \lim_{A\rightarrow \infty}\d\biggl([-1,1]^N\times\displaystyle\prod_{p\in [M,M_1)}T_p\times \displaystyle\prod_{p\notin [M,M_1)}\Z_p^N,A;P\biggr)=\frac{\prod_{p\in [M,M_1)}\sigma_{p}(T_p)}{\prod_{p\in [M,M_1)}\sigma_{p}(\Z_p)}.
\end{aligned}
\end{equation}
Meanwhile, we see that
\begin{equation*}
\begin{aligned}
   &\d\biggl([-1,1]^N\times\biggl(\displaystyle\prod_{p\in [M,M_1)}T_p\biggr)^c\times \displaystyle\prod_{p\notin [M,M_1)}\Z_p^N,A;P\biggr) \\
   &\leq \frac{\left\{\a\in [-A,A]^N\cap \Z^N\middle|\ \begin{aligned}
        &(i)\ \exists\ p>M\ \text{s.t.}\ f_{\a}(\x)=0\ \text{has no solution in}\ \Z_p^n\\
        &(ii)\ P(\a)=0
    \end{aligned}\right\}}{\#\left\{\a\in [-A,A]^N\cap \Z^N\middle|\ P(\a)=0\right\}}.
\end{aligned}
\end{equation*}
Then, it  follows by the same argument leading from ($\ref{d1AM}$) to $(\ref{3.123.12})$ that 
\begin{equation}\label{3.2929}
    \begin{aligned}
\lim_{A\rightarrow\infty}\d\biggl([-1,1]^N\times\biggl(\displaystyle\prod_{p\in [M,M_1)}T_p\biggr)^c\times \displaystyle\prod_{p\notin [M,M_1)}\Z_p^N,A;P\biggr)\ll \frac{1}{M^{r-k-1}\log M}.
    \end{aligned}
\end{equation}
Thus, on noting that
\begin{equation*}
\begin{aligned}
    &\d\biggl([-1,1]^N\times\displaystyle\prod_{p\in [M,M_1)}T_p\times \displaystyle\prod_{p\notin [M,M_1)}\Z_p^N,A;P\biggr)\\
    &=1-\d\biggl([-1,1]^N\times\biggl(\displaystyle\prod_{p\in [M,M_1)}T_p\biggr)^c\times \displaystyle\prod_{p\notin [M,M_1)}\Z_p^N,A;P\biggr),
\end{aligned}
\end{equation*}
we find from ($\ref{3.2929}$) that
\begin{equation}\label{final2}
    \lim_{A\rightarrow \infty}\d\biggl([-1,1]^N\times\displaystyle\prod_{p\in [M,M_1)}T_p\times \displaystyle\prod_{p\notin [M,M_1)}\Z_p^N,A;P\biggr)>1/2,
\end{equation}
for sufficiently large $M>0.$ Therefore, it follows from $(\ref{final1})$ and $(\ref{final2})$ that for sufficiently large $M>0,$ one has
\begin{equation*}
\begin{aligned}
    &\frac{\prod_{p\geq M}\sigma_p(T_p)}{\prod_{p\geq M}\sigma_p(\Z_p)}\\&=\lim_{M_1\rightarrow \infty} \frac{\prod_{p\in [M,M_1)}\sigma_{p}(T_p)}{\prod_{p\in [M,M_1)}\sigma_{p}(\Z_p)}\\
    &=\lim_{M_1\rightarrow\infty}\lim_{A\rightarrow \infty}\d\biggl([-1,1]^N\times\displaystyle\prod_{p\in [M,M_1)}T_p\times \displaystyle\prod_{p\notin [M,M_1)}\Z_p^N,A;P\biggr)>1/2.
\end{aligned}
\end{equation*}
Hence, the inequality $(\ref{22})$ holds. By using the inequalities $(\ref{11})$ and $(\ref{22})$, one finds that
$$\frac{\prod_{p<M} \sigma_{p}(B_p(\b_p,\eta_p))\cdot \prod_{p\geq M}\sigma_{p}(T_p)\cdot \sigma_{\infty}(B_{\infty}(\b_{\infty}/C,\eta_{\infty}/C))}{\prod_p \sigma_p(\Z_p)\cdot \sigma_{\infty}([-1,1]^N)}>0,$$
and thus we conclude from ($\ref{lowerbound}$) that 
\begin{align*}
     \lim_{A\rightarrow \infty}\varrho_{d,n}^{P,\text{loc}}(A)>0.
\end{align*}
\end{proof}


\begin{thebibliography}{70}

\bibitem{RefJ70}\label{ref70}
 E. Artin, The collected papers of Emil Artin, (Addison–Wesley, Reading,
MA, 1965).

\bibitem{RefJ701}\label{ref701}
J. Ax and S. Kochen, Diophantine problems over local fields. I, Amer.
J. Math., 87 (1965), 605–630.

\bibitem{RefJ7}\label{ref7}
 M. Bhargava, A positive proportion of plane cubics fail the Hasse principle, arXiv:1402.1131v1 (2014).

\bibitem{RefJ13}\label{ref13}
 M. Bhargava, The geometric sieve and the density of squarefree values of invariant polynomials, arXiv:1402.0031v1 (2014).

\bibitem{RefJ8}\label{ref8}
B. J. Birch, Forms in many variables, Proc. Roy. Soc. London Ser. A 265 (1961/62), 245–263

\bibitem{RefJ80}\label{ref80}
 R. Brauer, A note on systems of homogeneous algebraic equations, Bull.
Amer. Math. Soc., 51 (1945), 749–755.


\bibitem{RefJ8023}\label{ref8023}
M. Bright, T. Browning, D. Loughran, Failures of weak approximation in families. Compositio Math. 152(7) (2016), 1435-1475.


\bibitem{RefJ801}\label{ref801}
 S.S. Brown, Bounds on transfer principles for algebraically closed and
complete discretely valued fields, Mem. Amer. Math. Soc., 15 (1978),
no. 204, iv+92pp.

\bibitem{RefJ3121}\label{ref3121}
T. Browning and R. Heath-Brown. The geometric sieve for quadrics, Forum Mathematicum, vol. 33, no. 1, 2021, pp. 147-165.

\bibitem{RefJ3}\label{ref3}
T. D. Browning, P. Le Boudec, W. Sawin, The Hasse principle for random fano hypersurfaces, Ann. of Math. (2) 197 (3), 1115-1203. 



\bibitem{RefJ2}\label{ref2}
J. Br\"udern and R. Dietmann, Random Diophantine equations, I, Adv. Math. 256 (2014), 18-45.

\bibitem{RefJ11}\label{ref11}
J. Br\"udern and T. D. Wooley, An instance where the major and minor arc integrals meet, Bull. London Math. Soc. 51 (2019), no. 6, 1113--1128.


\bibitem{RefJ31}\label{ref31}
 J.-L. Colliot-Th\'el\`ene, Points rationnels sur les fibrations, Higher dimensional varieties and rational
points (Budapest, 2001), Bolyai Soc. Math. Stud., vol. 12, Springer, Berlin, 2003, pp. 171–221.

\bibitem{CT-S}\label{CT-S}
J-L. Colliot-Thélène and A. N. Skorobogatov, The Brauer-Grothendieck group. Ergebnisse der Mathematik und ihrer Grenzgebiete. 3. Folge. A Series of Modern Surveys in Mathematics, 
71. Springer, Cham, (2021)

\bibitem{RefJ311}\label{ref311}
H. Davenport and D. J. Lewis, Homogeneous additive equations, Proc.
Roy. Soc. Ser. A, 274 (1963), 443–460.

\bibitem{RefJ15}\label{ref15}
 T. Ekedahl, An infinite version of the Chinese remainder theorem, Comment. Math. Univ. St.
Paul. 40 (1991), 53–59.

\bibitem{RefJ1555}\label{ref1555}
D. El-Baz, D. Loughran, E. Sofos, Multivariate normal distribution for integral points on varieties, Trans. Amer. Math. Soc. 375 (2022), 3089-3128.


\bibitem{RefJ151}\label{ref151}
D. R. Heath-Brown, Artin's Conjecture on Zeros of $p$-Adic Forms, Proceedings of the International Congress of Mathematicians, Volume II (2010), 249–257.

\bibitem{RefJ1511}\label{ref1511}
Z. Huang, Equidistribution of rational points and the geometric sieve for toric varieties, arXiv:2111.01509.

\bibitem{RefJ100}\label{ref100}
 F.-V. Kuhlmann. Maps on ultrametric spaces, Hensel’s lemma, and differential equations over valued fields. Communications in Algebra, 39(5):1730–
1776, 2011.

\bibitem{RefJ300}\label{ref300}
D. B. Leep and W. M. Schmidt, Systems of homogeneous equations, Invent. Math., 71 (1983), 539–549.

\bibitem{RefJ100.2}\label{ref100.2}
S. Lang and A. Weil, Number of points of varieties in finite fields, Amer. J. Math. 76 (1954), 819–827.

\bibitem{RefJ100.1}\label{ref100.1}
J.Liu and L. Zhao, On forms in prime variables, arXiv:2105.12956 (2021).

\bibitem{RefJ100.12}\label{ref100.12}
 K. N. Ponomaryov, Semialgebraic sets and variants of the Tarski–Seidenberg–Macintyre theorem. Algebra and Logic 34 (1995), 182–191.


\bibitem{RefJ30}\label{ref30}
B. Poonen, M. Stoll, The Cassels-Tate pairing on polarized abelian varieties. Ann. of Math. (2) 150 (1999), 1109-1149.

\bibitem{RefJ14}\label{ref14}
B. Poonen and J. F. Voloch, Random Diophantine equations, Arithmetic of higher-dimensional algebraic varieties (Palo Alto, CA, 2002), Progr. Math., vol. 226, Birkh\"auser Boston, Boston, MA, 2004, pp. 175–184.

\bibitem{RefJ101}\label{ref101}
S. L. Rydin Myerson, Systems of many forms. DPhil thesis, University of Oxford (2016).

\bibitem{RefJ26}\label{ref26}
S. L. Rydin Myerson, Quadratic forms and systems of forms in many variables. Invent. math. 213 (2018), 205–235.

\bibitem{RefJ9}\label{ref9}
 W. M. Schmidt, Simultaneous rational zeros of quadratic forms, in: Seminar on Number
Theory (Paris 1980-81), Progr. Math. 22, Birkh\"auser, Boston, MA, 1982, 281–307.

\bibitem{RefJ10}\label{ref10}
W. M. Schmidt, The density of integer points on homogeneous varieties, Acta Math.
154 (1985), no. 3–4, 243–296

\bibitem{RefJ1012}\label{ref1012}
 G. Terjanian, Un contre-exemple \`a une conjecture d’Artin, C. R. Acad.
Sci. Paris S\'er. A-B, 262 (1966), A612.

\bibitem{RefJ102}\label{ref102}
T.D. Wooley, On the local solubility of Diophantine systems, Compositio
Math., 111 (1998), 149–165.

\bibitem{RefJ1000}\label{ref1000}
K. Yeon, The Hasse principle for homogeneous polynomials with random coefficients over thin sets, arXiv:2305.08035.

\bibitem{RefJ100.0}\label{ref100.0}
J. Zahid, Nonsingular points on hypersurfaces over $\mathbb{F}_q$, J. Math. Sci. (N.Y.) 171, 731–735 (2010). 


\end{thebibliography}
\end{document}